\begin{document}
\numberwithin{equation}{section}
\newcounter{thmcounter}
\newcounter{Remarkcounter}
\newcounter{Defcounter}
\numberwithin{thmcounter}{section}
\newtheorem{Prop}[thmcounter]{Proposition}
\newtheorem{Corol}[thmcounter]{Corollary}
\newtheorem{theorem}[thmcounter]{Theorem}
\newtheorem{Lemma}[thmcounter]{Lemma}
\theoremstyle{definition}
\newtheorem{Def}[Defcounter]{Definition}
\theoremstyle{remark}
\newtheorem{Remark}[Remarkcounter]{Remark}
\newtheorem{Example}[Remarkcounter]{Example}

\newcounter{defcounter}
\setcounter{defcounter}{0}
\newcounter{defcounteri}
\setcounter{defcounteri}{0}
\newenvironment{BihEq}{%
\addtocounter{equation}{-1}
\refstepcounter{defcounter}
\renewcommand\theequation{BH\thedefcounter}
\begin{equation}}
{\end{equation}}
\newenvironment{BicEq}{%
\addtocounter{equation}{-1}
\refstepcounter{defcounteri}
\renewcommand\theequation{BC\thedefcounteri}
\begin{equation}}
{\end{equation}}

\newcommand{\diag}{\mathrm{diag}\ }
\newcommand{\DIV}{\mathrm{div}}
\title{A Classification of Biconservative Hypersurfaces in a Pseudo-Euclidean Space}
\author{Abhitosh Upadhyay\footnote{Harish Chandra Research Institute, Chhatnag Road, Jhusi, Allahabad, India, 211019, e-mail: abhi.basti.ipu@gmail.com, abhitoshupadhyay@hri.res.in} and Nurettin Cenk Turgay \footnote{Istanbul Technical University, Faculty of Science and Letters, Department of  Mathematics, 34469 Maslak, Istanbul, Turkey, e-mail: turgayn@itu.edu.tr} \footnote{Corresponding Author}}
\maketitle
\begin{abstract}
In this paper, we study biconservative hypersurfaces of index 2 in $\mathbb E^{5}_{2}$. We give the complete classification of biconservative hypersurfaces with diagonalizable shape operator at exactly three distinct principal curvatures. We also give an explicit example of biconservative hypersurfaces with four distinct principal curvatures.

\textbf{Keywords.} Null 2-type submanifolds, biharmonic submanifolds, biconservative \linebreak hypersurfaces, pseudo-Euclidean space
\end{abstract}

\section{Introduction}
Let $\mathbb E^m_s$ denote the pseudo-Euclidean $m$-space with the canonical 
pseudo-Euclidean metric tensor $g$ of index $s$ given by  
$$
 g=-\sum\limits_{i=1}^s dx_i^2+\sum\limits_{j=s+1}^m dx_j^2,
$$
where $(x_1, x_2, \hdots, x_m)$  is a rectangular coordinate system in $\mathbb E^m_s$. Consider an $n$-dimensional oriented submanifold  $M$ of $\mathbb E^m_s$ with Laplace operator $\Delta$ and mean curvature vector $H$. Let us consider  an isometric immersion $x:M\rightarrow\mathbb E^m_s$. $M$ is said to be biharmonic if $x$ satisfies $\Delta^2 x=0$. If the tangential part of $\Delta^2 x$ vanishes identically, then $M$ is said to be biconservative \cite{ChenKitapTotal2ndEd, Fu2013MinkBih}.

The well-known formula of Beltrami provides a relation between the position vector $x$ and mean curvature vector given by
\begin{equation}\label{LaplaceBeltramiFormula}
\Delta x=-nH. 
\end{equation}
Equation \eqref{LaplaceBeltramiFormula} implies that biharmonicity of $M$ is equivalent to have harmonic mean curvature vector, i.e., the equation $\Delta^2 x=0$ is satisfied if and only if  $\Delta H =0$. If $M$ is minimal, i.e., $H=0$, then from equation \eqref{LaplaceBeltramiFormula}, it is biharmonic. Bang-Yen Chen conjectured that if the ambient space is Euclidean then the converse of this statement is also true \cite{ChenOpenProblems, ChenRapor}. Although, Chen's biharmonic conjecture is still an open problem, there are a lot of results on submanifolds of Euclidean spaces which provide affirmative partial solutions to the conjecture \cite{BMO1,ChenOpenProblems,ChenRapor,Defever1996,Jiang1985,Dimitric1992,HsanisField, YuFu2014}. For example,  recently, Yu Fu has studied biharmonic hypersurfaces in $\mathbb E^5$ with at most three distinct principal curvatures and proved that the conjecture is true for this case \cite{YuFu2014}. 

 It has been observed that if the ambient space is a semi-Euclidean space then there may exist  non-minimal biharmonic submanifolds. For example, the surface given by 
$$x(u,v)=(\phi(u,v),u,v,\phi(u,v))$$
is a non-minimal biharmonic surface in $\mathbb E^4_1$ whereas $\phi$ is a particular chosen smooth function \cite{ChenIshikawa1991Bih}. One can also see \cite{ChenIshikawa1998Bih} for non-minimal surfaces in $\mathbb E^4_2$. Many geometers studied biharmonic submanifolds in semi-Euclidean spaces and obtained some interesting results in this direction \cite{Arvan1,Arvan2,Defever2006,CMO2,Papantoniou,Fu2013MinkBih}.

On the other hand, in order to understand the geometry of biharmonic submanifolds, \linebreak geometers have shown attention to study geometrical properties of biconservative submanifolds\linebreak and contributed accordingly \cite{chenMunteanu2013,FetcuOniciucPinheiro,TurgayHHypersurface,CMOP,MOR20141,
HsanisField}. It has been observed that some authors have called biconservative hypersurfaces as ``H-hypersurfaces" \cite{TurgayHHypersurface,HsanisField}. For example, Chen and\linebreak Munteanu showed that $\delta(2)$-ideal biconservative hypersurface in Euclidean space $\mathbb E^n$ is \linebreak isoparametric \cite{chenMunteanu2013}. Further, Caddeo et al.  classified biconservative surfaces in the 3-dimensional\linebreak Riemannian space form \cite{CMOP}. Montaldo, Oniciuc and Ratto  studied $SO(p+1)\times SO(q+1)$\linebreak-invariant and $SO(p+1)$-invariant biconservative hypersurfaces in Euclidean space by using framework of equivariant differential geometry \cite{MOR20141}. Recently, authors classify biconservative\linebreak surfaces in $S^n\times R$ and $H^n\times R$ in \cite{FetcuOniciucPinheiro}. Most recently, the second named author obtained complete classification of biconservative hypersurfaces with three distinct principal curvatures in Euclidean spaces \cite{TurgayHHypersurface}. It has been observed that Papantoniou et al. proved that a non-degenerate biharmonic hypersurface with index 2 in  $\mathbb E^4_2$ is minimal \cite{Papantoniou}. Most recently, in  \cite{PseudoYuxinDongandYe-Lin}, authors presented a brief summary of work on pseudo-Riemannian submanifolds which show immense possibilities to investigate in this direction.

  The purpose of the present paper is to study biconservative hypersurfaces of index $2$ in $\mathbb E^5_2$. In Section 1, we have presented a brief introduction of the previous work which has been done in this direction. In Section 2, we have collected the formulae and information which are useful in our subsequent sections. In Section 3, we have obtained our main results whereas in Section 4, we have presented the conclusion of the work and explicit examples to support our results. 
  
  The hypersurfaces which we are dealing are smooth and connected unless otherwise stated.

\section{Prelimineries}
Let $M$ be an oriented hypersurface in $\mathbb E^m_s$ with the unit normal vector field $N$ associated with the orientation of $M$. We denote Levi-Civita connections of $\mathbb E^m_s$ and $M$ by $\widetilde{\nabla}$ and $\nabla$, respectively. Then, the Gauss and Weingarten formulas are given by
\begin{eqnarray}
\label{MEtomGauss} \widetilde\nabla_X Y&=& \nabla_X Y + h(X,Y),\\
\label{MEtomWeingarten} \widetilde\nabla_X N&=& -S(X)
\end{eqnarray}
for all tangent vectors fields $X,\ Y$, where $h$ is the second fundamental form  and   $S$ is the shape operator of $M$, related by  $\left\langle h(X, Y),N \right\rangle = \left\langle S(X), Y \right\rangle.$ The Gauss and Codazzi equations are given, respectively, by
\begin{eqnarray}
\label{MinkGaussEquation}\label{GaussEq} R(X,Y,Z,W)&=&\langle h(Y,Z),h(X,W)\rangle-
\langle h(X,Z),h(Y,W)\rangle,\\
\label{MinkCodazzi} (\bar \nabla_X h )(Y,Z)&=&(\bar \nabla_Y h )(X,Z),
\end{eqnarray}
where  $R$ is the curvature tensor associated with connection $\nabla$ and  $\bar \nabla h$ is defined by
$$(\bar \nabla_X h)(Y,Z)=\nabla^\perp_X h(Y,Z)-h(\nabla_X Y,Z)-h(Y,\nabla_X Z).$$ 
We  denote complete pseudo-Riemannian manifolds of signature ($s, m-1$) by
\begin{eqnarray}
\mathbb S^{m-1}_{s}(r^2)&=&\{x\in\mathbb E^m_s: \langle x, x \rangle=r^{-2}\},\notag
\\
\mathbb H^{m-1}_{s-1}(-r^2)&=&\{x\in\mathbb E^{n+1}_1: \langle x, x \rangle=-r^{-2}\}\notag
\end{eqnarray}
where $r^{-2}$ and $-r^{-2}$ are respective constant sectional curvatures and $m>1$. $\mathbb S^{m-1}_{s} (r^2)$ and $\mathbb H^{m-1}_{s-1}(-r^2)$ are called the pseudo-Riemannian sphere and pseudo-Hyperbolic space, respectively.\linebreak
Moreover, if index $s=0$, then we denote $\mathbb H^{m-1}_{0}(-r^2)=\mathbb H^{m-1}(-r^2)$ and $\mathbb S^{m-1}_{0}(r^2)=\mathbb S^{m-1}(r^2)$.

\subsection{Biconservative Hypersurfaces in $\mathbb E^5_2$}
Let $x:M\rightarrow\mathbb E^5_2$ be an isometric immersion of an index $2$ hypersurface $M$ in $\mathbb E^5_2$ with the shape operator $S$. The mean curvature vector $\mathbf{H}$ of $M$ is defined by 
\begin{eqnarray}
\mathbf{H}=  \frac{1}{4}\hspace{.1cm}tr\hspace{.1cm} h
\end{eqnarray}
and $\mathbf{H}=HN$, where $H$ is the (first) mean curvature (function) of $M$. Consider an orthonormal base field $\{e_1,e_2,e_3,e_4\}$ such that $\langle e_i,e_i\rangle=\varepsilon_i,\ i=1,2,3,4$.
The Laplace operator $\Delta$ is defined as 
\begin{equation}\nonumber
\Delta = \sum\limits_{i=1}^4 \epsilon_i(\nabla_{e_{i}}e_{i} - e_{i}e_{i}).
\end{equation}
Moreover, gradient of a smooth function $f:M^{4}_{2}\rightarrow \mathbb R$ is given by 
\begin{equation}\nonumber
\nabla f =  \sum\limits_{i=1}^4 \epsilon_ie_{i}(f)e_{i}.
\end{equation}
By direct computation, using equation \eqref{LaplaceBeltramiFormula}, one can see that $M$ is biconservative hypersurface if and only if it satisfies the equation
\begin{BicEq}
\label{HSurfBicCondE52} S(\nabla H)=-2H\nabla H.\\
\end{BicEq}
\begin{Remark}
If $M$ has constant mean curvature $H$, then it is obvious that equation \eqref{HSurfBicCondE52} is satisfied. Therefore, we assume that $H$ is not constant, i.e., $\nabla H\neq0$. One can also refer \cite{ChenKitapTotal2ndEd,Lucas2011}.
\end{Remark}

\begin{Lemma}
Let $M$ be a hypersurface of index $2$ in $\mathbb E^5_2$ with $H$ as its (first) mean curvature. Assume that $\nabla H$ is not light-like. If $M$ is biconservative, then with respect to a suitable frame field $\{e_1=\frac{\nabla H}{\|\nabla H\|},e_2,e_3,e_4\}$, its shape operator $S$ has one of the following forms: 
\begin{align} \label{SOPCASES}
\begin{split}
\mbox{Case I. }S=\left( \begin{array}{cccc}
-2H &0&0&0\\
0 &k_2&0&0\\
0 &0&k_3&0\\
0 &0&0&k_4
\end{array}\right),\quad
\mbox{Case II. }&S=\left( \begin{array}{cccc}
-2H &0&0&0\\
0 &k_2&1&0\\
0 &0&k_2&0\\
0 &0&0&k_4
\end{array}\right),\\
\mbox{Case III. }S=\left( \begin{array}{cccc}
-2H &0&0&0\\
0 &k_2&-\nu&0\\
0 &\nu&k_2&0\\
0 &0&0&k_4
\end{array}\right),\quad
\mbox{Case IV. }&S=\left( \begin{array}{cccc}
-2H &0&0&0\\
0 &2H&0&0\\
0 &0&2H&-1\\
0 &1&0&2H
\end{array}\right),
\end{split}
\end{align}
for some smooth functions $k_2,k_3,k_4,\nu$. In Cases I and III, the induced metric $g_{ij}=g(e_i,e_j)=\langle e_i,e_j\rangle$ of $M$ is $g_{ij}=\varepsilon_i\delta_{ij}\in\{-1,1\}$, while in Cases II and IV, it is given by
$$g=\left( \begin{array}{cccc}
\varepsilon_1 &0&0&0\\
0 &0&-1&0\\
0 &-1&0&0\\
0 &0&0&-\varepsilon_1
\end{array}\right).$$
\end{Lemma}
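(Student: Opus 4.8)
The plan is to exploit the biconservative equation \eqref{HSurfBicCondE52} to pin down one eigendirection of $S$ and then reduce the whole statement to a linear-algebra classification of self-adjoint operators on the orthogonal complement. First I would read \eqref{HSurfBicCondE52} as the bare assertion that $\nabla H$ is an eigenvector of the shape operator with eigenvalue $-2H$. Since by hypothesis $\nabla H$ is not light-like, the vector $e_1=\nabla H/\|\nabla H\|$ is a well-defined unit vector with $\langle e_1,e_1\rangle=\varepsilon_1\in\{-1,1\}$ and $Se_1=-2He_1$; this already explains the first row and column of every matrix in \eqref{SOPCASES}.

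Next, because $S$ is self-adjoint with respect to the indefinite induced metric and $e_1$ is non-null, the orthogonal complement $V=\{e_1\}^{\perp}$ is a non-degenerate $3$-dimensional subspace that is invariant under $S$, and $S|_V$ is again self-adjoint. Counting signatures, if $\varepsilon_1=-1$ then $V$ has index $1$, while if $\varepsilon_1=+1$ then $V$ has index $2$; in either case, up to an overall sign, $V$ is a $3$-dimensional Lorentzian vector space. Thus the problem becomes: classify, up to choice of a suitable (possibly pseudo-orthonormal) basis $\{e_2,e_3,e_4\}$, the self-adjoint endomorphisms of a $3$-dimensional Lorentzian space.

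The heart of the argument, and the part I expect to be the main obstacle, is this canonical-form classification. Unlike the positive-definite case, a self-adjoint operator on an indefinite space need be neither diagonalizable nor have real spectrum, precisely because eigenvectors may be null. I would organize the discussion according to the minimal polynomial of $S|_V$ and the causal character of its invariant subspaces, obtaining exactly four possibilities: (i) $S|_V$ is diagonalizable over $\mathbb R$ with an orthonormal eigenbasis, giving Case I; (ii) $S|_V$ has a double real eigenvalue whose eigenspace is degenerate, forcing a single $2\times2$ Jordan block supported on a null plane $\mathrm{span}\{e_2,e_3\}$, giving Case II; (iii) $S|_V$ has a pair of complex-conjugate eigenvalues $k_2\pm i\nu$, necessarily realized on a timelike--spacelike $2$-plane, yielding the rotational block of Case III; and (iv) $S|_V$ has a triple real eigenvalue carried by a single Jordan chain of length $3$, giving Case IV. In each non-diagonalizable case I would produce the adapted basis explicitly and then read off the induced metric: the diagonal forms $g_{ij}=\varepsilon_i\delta_{ij}$ arise in Cases I and III (with $\varepsilon_3=-\varepsilon_2$ in Case III, so the complex block lives on a Lorentzian plane), whereas the Jordan cases II and IV force the neutral pairing $\langle e_2,e_3\rangle=-1$, $\langle e_2,e_2\rangle=\langle e_3,e_3\rangle=0$ displayed in the statement.

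Finally, to obtain the specific entries $2H$ in Case IV I would invoke the trace identity $\operatorname{tr}S=4H$ coming from $\mathbf H=\tfrac14\operatorname{tr}h$: since Case IV carries a single eigenvalue $\lambda$ on $V$, the relation $-2H+3\lambda=4H$ forces $\lambda=2H$, which is exactly the value appearing in \eqref{SOPCASES}. Verifying self-adjointness of each displayed matrix against its metric, and checking that the signatures are consistent with index $2$ for both signs of $\varepsilon_1$, then completes the proof; the only genuinely delicate point remains the exhaustiveness of the four-type list, that is, ruling out any further Jordan--Segre type on the $3$-dimensional complement.
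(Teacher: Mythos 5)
Your proposal follows essentially the same route as the paper: take $e_1=\nabla H/\|\nabla H\|$ as a non-null eigenvector with eigenvalue $-2H$, restrict $S$ to the non-degenerate $S$-invariant complement, and invoke the canonical forms of a self-adjoint operator on a $3$-dimensional Lorentzian space to obtain the four cases and the associated metrics. Your added details (organizing the canonical forms by minimal polynomial, and using $\operatorname{tr}S=4H$ to get the entry $2H$ in Case IV) only make explicit what the paper leaves implicit, so the argument is correct and not substantively different.
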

\begin{proof}
If $M$ is biharmonic then equation \eqref{HSurfBicCondE52} is satisfied. Let $e_1=\frac{\nabla H}{\|\nabla H\|}$ which satisfies $Se_1=k_1e_1$ and we have $k_1=-2H$. Consider the  3-dimensional distribution $\hat D=(\mathrm{span}\{e_1\})^\perp$ having index 1 or 2 subject to being time-like or space-like of $e_1$, respectively, then we have $S(\hat D)\subset\hat D$. Therefore, the restriction $\hat S:\hat D\rightarrow$ $S$ into $\hat D$, is a self adjoint endomorphism expressed as
$$
\left( \begin{array}{ccc}
a_{11}&a_{12}&a_{13}\\
-a_{12}&a_{22}&a_{23}\\
-a_{13}&a_{23}&a_{33}
\end{array}\right)
$$
with respect to an arbitrary basis $\{f_2,f_3,f_4\}$ of $\hat D$ such that $-\langle f_2,f_2\rangle=\langle f_3,f_3\rangle=\langle f_4,f_4\rangle$. Therefore, the matrix representation  of $\hat S$ has one of the following forms
\begin{align} \label{SOPCASESForREST}
\begin{split}
\mbox{Case I. }\hat S=\left( \begin{array}{ccc}
k_2&0&0\\
0&k_3&0\\
0&0&k_4
\end{array}\right),\quad
\mbox{Case II. }&\hat S=\left( \begin{array}{ccc}
k_2&1&0\\
0&k_2&0\\
0&0&k_4
\end{array}\right),\\
\mbox{Case III. }\hat S=\left( \begin{array}{ccc}
k_2&-\nu&0\\
\nu&k_2&0\\
0&0&k_4
\end{array}\right),\quad
\mbox{Case IV. }&\hat S=\left( \begin{array}{ccc}
k_2&-\nu&0\\
0&k_2&-1\\
1&0&k_2
\end{array}\right)
\end{split}
\end{align}
 with respect to a suitable frame field $\{e_2,e_3,e_4\}$. Hence, the matrix representation of $S$ with respect to the frame field $\{e_1=\frac{\nabla H}{\|\nabla H\|}, e_2,e_3,e_4\}$ takes one of these four forms of the Lemma.
\end{proof}

\section{Classification of Biconservative Hypersurfaces with \\Diagonalizable Shape Operator}
A hypersurface is said to be isoparametric if its shape operator $S$ is diagonalizable and has constant eigen values (principal curvatures). In this section, we obtain classification of the biconservative hypersurfaces which has diagonalizable shape operator. 

\subsection{Connection forms}
Let $M$ be a hypersurface of index 2 with diagonalizable shape operator $S$ in $\mathbb E^{5}_{2}$. Consider an orthonormal frame field $\{e_{1}, e_{2}, e_{3}, e_{4}\}$ of $M$ consisting of its principal directions and $k_{1}, k_{2}, k_{3}, k_{4}$ are the corresponding principal curvatures whereas $\{\theta_{1}, \theta_{2}, \theta_{3}, \theta_{4}\}$ be the dual base field. The first structural equation of Cartan is given by
\begin{equation}\label{CartanFirstStr}
d\theta_{i} = \sum_{j = 1}^{4}\theta_{j}\wedge\omega_{ij}, \quad i = 1, 2, 3, 4,
\end{equation} 
where $\omega_{ij}$ denotes the connection forms satisfying $\omega_{ij}(e_{l}) = \langle\nabla_{e_{l}}e_{i}, e_{j}\rangle$, corresponding to the chosen frame field. Then, the Levi-Civita connection $\nabla$ of $M$ becomes
\begin{subequations}\label{LeviCivitaMEq1}
\begin{eqnarray}
\label{LeviCivitaMEq1a} \nabla_{e_{i}}e_{1} &=& \epsilon \omega_{12}(e_{i})e_{2} - \epsilon \omega_{13}(e_{i})e_{3} - \epsilon \omega_{14}(e_{i})e_{4},\\
\label{LeviCivitaMEq1b} \nabla_{e_{i}}e_{2} &=& \epsilon \omega_{12}(e_{i})e_{1} - \epsilon \omega_{23}(e_{i})e_{3} - \epsilon \omega_{24}(e_{i})e_{4},\\
\label{LeviCivitaMEq1c} \nabla_{e_{i}}e_{3} &=& - \epsilon \omega_{13}(e_{i})e_{1} - \epsilon\omega_{23}(e_{i})e_{2} - \epsilon \omega_{34}(e_{i})e_{4},\\
\label{LeviCivitaMEq1d} \nabla_{e_{i}}e_{4} &=& - \epsilon \omega_{14}(e_{i})e_{1} - \epsilon \omega_{24}(e_{i})e_{2} + \epsilon \omega_{34}(e_{i})e_{3}.
\end{eqnarray}
\end{subequations}
Now, from the Codazzi equation \eqref{MinkCodazzi}, we have
\begin{subequations}\label{CodazziFormaLL}
\begin{eqnarray}\label{CodazziForm1}e_{i}(k_{j}) = \epsilon_{j}\omega_{ij}(e_{j})(k_{i} - k_{j}),\\
\label{CodazziForm2}\omega_{jl}(e_{i})(k_{j} - k_{l}) = \omega_{il}(e_{j})(k_{i} - k_{l}), 
\end{eqnarray}
\end{subequations}
where indices $i,j,k$ are distinct and varies from 1 to 4.

Let $M$ is a biconservative hypersurface, i.e., $S$ and $H$ satisfies equation \eqref{HSurfBicCondE52}. This implies that $\nabla H$ is a principal direction with the corresponding principal curvature proportional to $H$ by a constant. Therefore, $e_{1} = \frac{\nabla H}{|\nabla H|}$ and $k_{1} + k_{2} + k_{3} + k_{4} = 4H$ gives that $k_{1} = - 2H$. Thus, we have
\begin{equation}\label{HangiHungiHsQQQQ1}
k_{2} + k_{3} + k_{4} = 6H.
\end{equation}
Since $e_{1}$ is proportional to $\nabla k_{1}$, we have
\begin{equation}\label{HangiHungiHs}
e_{2}(k_{1}) =  e_{3}(k_{1}) = e_{4}(k_{1}) = 0;\quad\quad e_{1}(k_{1})\neq 0.
\end{equation}
If $k_A=k_1,\ A=2,3,4$, then \eqref{CodazziForm1} leads to $e_{1}(k_{1})= 0$, which contradicts the equation \eqref{HangiHungiHs}. Therefore, we locally assume that $k_A+2H$ does not vanish for $A=2,3,4$.

\begin{Lemma}\label{LABELLEMMA1}
Let $M$ be a biconservative hypersurface of index 2 in $\mathbb E^{5}_{2}$ with diagonalizable shape operator. Then connection forms of $M$ satisfy
\begin{align}\label{ALLOFEQUATIONSCOMFOTM}
\begin{split}
\omega_{12}(e_{1}) = \omega_{12}(e_{3}) = \omega_{12}(e_{4}) =0,\\
\omega_{13}(e_{1}) = \omega_{13}(e_{2}) = \omega_{13}(e_{4}) =0,\\ 
\omega_{14}(e_{1}) = \omega_{14}(e_{2}) = \omega_{14}(e_{3}) = 0,\\
\omega_{23}(e_{1})(k_2-k_3) = \omega_{24}(e_{1}) (k_2-k_4)= \omega_{34}(e_{1}) (k_3-k_4)= 0.
\end{split}
\end{align}

\begin{proof}
Combining equations \eqref{CodazziForm1} and \eqref{HangiHungiHs}, we obtain $\omega_{1A}(e_{A})=0,\ A=2,3,4$. On the other hand, using $[e_A,e_B](k_1)=0$, we get $\omega_{1A}(e_B)=\omega_{1B}(e_A)$, for $A,B=2,3,4$ and $A\neq B$. 

  Therefore, equation \eqref{CodazziForm2} yields that $\omega_{1A}(e_B)=0$ for $i=A, j=B$, $l=1$ and \linebreak$\omega_{AB}(e_i)(k_A-k_B)=0$ for $i=A, j=1$, $l=B$.
\end{proof}
\end{Lemma}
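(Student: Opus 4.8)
The plan is to extract everything from the two Codazzi identities \eqref{CodazziForm1} and \eqref{CodazziForm2}, fed by the single input \eqref{HangiHungiHs} that $k_1=-2H$ is nonconstant only along $e_1$. I would organize the argument in three stages matching the structure of the conclusion: (i) the three entries $\omega_{1A}(e_1)=0$; (ii) the mixed entries $\omega_{1A}(e_B)=0$ for distinct $A,B\in\{2,3,4\}$; and (iii) the last line, which will follow once (ii) is in hand. Throughout I would use metric compatibility in the form $\omega_{ij}=-\omega_{ji}$, which is immediate from $e_l\langle e_i,e_j\rangle=0$.

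For stage (i), putting $i=A$, $j=1$ in \eqref{CodazziForm1} gives $e_A(k_1)=\epsilon_1\omega_{A1}(e_1)(k_A-k_1)$. The left side is zero by \eqref{HangiHungiHs} and $k_A-k_1=k_A+2H\neq0$ by the standing local assumption, so $\omega_{A1}(e_1)=0$ and hence $\omega_{1A}(e_1)=0$ for $A=2,3,4$. This is the first column of the first three lines.

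For stage (ii) I would first observe $[e_A,e_B](k_1)=e_A(e_B(k_1))-e_B(e_A(k_1))=0$. Expanding $[e_A,e_B]=\nabla_{e_A}e_B-\nabla_{e_B}e_A$ with \eqref{LeviCivitaMEq1} and applying it to $k_1$, every term except the $e_1$-component dies because $e_C(k_1)=0$ for $C\neq1$; since $e_1(k_1)\neq0$, the surviving identity is $\omega_{1A}(e_B)=\omega_{1B}(e_A)$. I would then substitute this symmetry into \eqref{CodazziForm2} with $i=A$, $j=B$, $l=1$, obtaining $\omega_{1A}(e_B)(k_B-k_A)=0$; thus $\omega_{1A}(e_B)=0$ as soon as $k_A\neq k_B$, completing the first three lines. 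Finally, stage (iii): taking \eqref{CodazziForm2} with $i=A$, $j=1$, $l=B$ reads $\omega_{1B}(e_A)(k_1-k_B)=\omega_{AB}(e_1)(k_A-k_B)$, and since the left side now vanishes we get $\omega_{AB}(e_1)(k_A-k_B)=0$, which is precisely the fourth line.

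The one genuinely delicate point, and the step I expect to be the obstacle, is the degenerate case $k_A=k_B$ in stage (ii): there the Codazzi relation collapses to $0=0$ and cannot by itself force the mixed term to vanish. In that situation I would invoke the rotational freedom of the frame inside the repeated eigenspace of $S$ to normalize the offending connection terms; note also that the corresponding entry of the fourth line is then automatically zero, so only the first three lines are at stake. In the distinct-principal-curvature regime that drives the later classification this complication does not occur, and the proof reduces to the clean chain (i), (ii), (iii) above.
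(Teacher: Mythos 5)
Your stages (i)--(iii) reproduce the paper's argument exactly: diagonal terms $\omega_{1A}(e_1)=0$ from \eqref{CodazziForm1} together with \eqref{HangiHungiHs} and $k_A\neq k_1$; the symmetry $\omega_{1A}(e_B)=\omega_{1B}(e_A)$ from $[e_A,e_B](k_1)=0$; and the two instances of \eqref{CodazziForm2} with $(i,j,l)=(A,B,1)$ and $(A,1,B)$ for the remaining lines. So in the regime $k_A\neq k_B$ your proof is correct and is the paper's proof.

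The one place you genuinely diverge is the degenerate case $k_A=k_B$, which you rightly flag but then propose to handle by rotating the frame inside the repeated eigenspace. Two remarks. First, this case cannot be brushed aside as irrelevant: the whole of Section 3.2 works with $k_2=k_3$, and the conclusions $\omega_{12}(e_3)=\omega_{13}(e_2)=0$ are used there (e.g.\ in Corollary \ref{CORRLODLASD}), so the lemma must cover it. Second, the rotation fix is both unnecessary and shaky: the quantity you want to kill is governed by the symmetric bilinear form $(X,Y)\mapsto\langle\nabla_Y e_1,X\rangle$ restricted to the repeated eigenspace, and when that eigenspace has signature $(1,1)$ (which occurs here, since the index is $2$) a self-adjoint operator on a Lorentzian plane need not be diagonalizable by isometries of the plane, so the normalization you invoke may simply not exist; one would also have to check smoothness of the rotated frame. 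The correct resolution is already sitting in your stage (iii): the instance of \eqref{CodazziForm2} with $(i,j,l)=(A,1,B)$ reads $\omega_{1B}(e_A)(k_1-k_B)=\omega_{AB}(e_1)(k_A-k_B)$, and when $k_A=k_B$ its right-hand side vanishes identically, so $k_1\neq k_B$ forces $\omega_{1B}(e_A)=0$ (and then $\omega_{1A}(e_B)=0$ by the symmetry from stage (ii)), with no frame change needed. Read that equation in both directions and the lemma closes in all cases; this is also the charitable reading of the paper's terse final sentence.
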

\begin{Remark}\label{AnImpRemarkTOGIBE1}
Using equation \eqref{ALLOFEQUATIONSCOMFOTM}, we obtain $[e_1,e_A](k_1)=0$ which yields that $e_A(e_1(k_1))=0$, for $A=2,3,4$. Similarly, we have $e_A(e_1^2(k_1))=0$ and  $e_A(e_1^3(k_1))=0$, whereas $e_1^n(k_1)=\underbrace{e_1e_1\hdots e_1}_{n\mbox{ times}}(k_1).$
\end{Remark}

\begin{Remark}\label{CoordSystemS}
From Lemma \ref{LABELLEMMA1}, we have $\omega_{1i}=f_i\theta_i$ for some smooth functions $f_i$. Therefore, Cartan's first structural equation \eqref{CartanFirstStr} implies that $d\theta_{1} = 0$, i.e., $\theta_{1}$ is closed. But the Poincare Lemma implies that it is locally exact, i.e., there exists a local (orthogonal) coordinate system $(s, t, u, v)$ on a neighbourhood of $m\in M$ such that $\theta_{1} = ds$, from which we obtain $e_{1} =  \frac{\partial}{\partial s}$. Thus, we have $k_{1} = k_{1}(s)$, $k_{i} = k_{i}(s, t, u, v)$, $i = 2, 3, 4$. Since $k_{1}'\neq 0$ due to equation \eqref{HangiHungiHs}, the inverse function theorem implies that $s = s(k_{1})$ on a neighbourhood $N_{m}$ of $m$ in $M$ and we have $k_{i} = k_{i}(k_{1}, t, u, v)$. 
\end{Remark}

\subsection{Three distinct principal curvatures}
Let the hypersurface $M$ has exactly three distinct principal curvatures and $k_1=-2H$. Since we have $k_1\neq k_i$ for $i=2,3,4$, so without loss of generality, we may assume $k_2=k_3$. Thus, we have either $\langle e_2,e_2\rangle=-\langle e_3,e_3\rangle=\varepsilon$ or $\langle e_2,e_2\rangle=\langle e_3,e_3\rangle=\varepsilon$.

\begin{Lemma}\label{LemmaPrCurvatures}
Let $M$ be a biconservative hypersurface of index 2 in $\mathbb E^5_2$ with diagonalizable shape operator. If $M$ has three distinct principal directions $k_1,k_2,k_4$, then $e_i(k_j)=0$ for $i=2,3,4$ and $j=1,2,4$.
\end{Lemma}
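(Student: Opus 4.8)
The plan is to show that every principal curvature is a function of $s$ alone (equivalently of $k_1$); since $e_i(k_1)=0$ for $i=2,3,4$ by \eqref{HangiHungiHs}, only $k_2(=k_3)$ and $k_4$ remain, and in fact the whole statement will collapse onto the vanishing of a single connection form.

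First I would read the easy derivatives straight off Codazzi. Putting $j=3,\,i=2$ in \eqref{CodazziForm1} gives $e_2(k_3)=\epsilon_3\omega_{23}(e_3)(k_2-k_3)=0$, and $j=2,\,i=3$ gives $e_3(k_2)=0$; since $k_2=k_3$ these read $e_2(k_2)=e_3(k_2)=0$. Next, \eqref{HangiHungiHsQQQQ1} together with $k_1=-2H$ yields $2k_2+k_4=-3k_1$; applying $e_2,e_3$ and using $e_2(k_1)=e_3(k_1)=0$ gives $e_2(k_4)=e_3(k_4)=0$, while applying $e_4$ gives $e_4(k_4)=-2e_4(k_2)$. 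Hence the entire lemma reduces to the single claim $e_4(k_2)=0$, and by \eqref{CodazziForm1} with $j=2,\,i=4$ we have $e_4(k_2)=\epsilon_2\omega_{42}(e_2)(k_4-k_2)$ with $k_4\neq k_2$, so I only need $\omega_{42}(e_2)=0$.

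Before attacking that, I would collect every connection form that Codazzi does kill. From \eqref{CodazziForm1} with $j=4,\,i=2,3$ (using $e_2(k_4)=e_3(k_4)=0$) one gets $\omega_{24}(e_4)=\omega_{34}(e_4)=0$. From \eqref{CodazziForm2}, the triples $(j,l,i)=(4,3,2),(4,2,3),(3,4,2)$ give $\omega_{43}(e_2)=0$, $\omega_{42}(e_3)=0$, and $\omega_{34}(e_2)=\omega_{24}(e_3)=0$; together with Lemma \ref{LABELLEMMA1} (which forces $\omega_{24}(e_1)=0$) and the skew-symmetry $\omega_{ij}(e_l)=-\omega_{ji}(e_l)$, this shows that the only surviving component of the form $\omega_{42}$ is $\omega_{42}(e_2)$. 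Comparing $e_4(k_2)$ and $e_4(k_3)$, which are equal because $k_2=k_3$, also yields the symmetry $\epsilon_2\omega_{42}(e_2)=\epsilon_3\omega_{43}(e_3)$.

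The main obstacle is that $\omega_{42}(e_2)$ is a ``diagonal-in-the-eigenplane'' component that none of the Codazzi relations \eqref{CodazziForm1}--\eqref{CodazziForm2} can reach, so I would bring in the structural equations \eqref{LeviCivitaMEq1} and the Gauss equation \eqref{GaussEq}. As a first step, the brackets $[e_4,e_2]$ and $[e_4,e_3]$ both lie in $\mathrm{span}\{e_2,e_3\}$ (all their $e_1$- and $e_4$-coefficients vanish by the list just assembled), so $[e_4,e_2](k_4)=[e_4,e_3](k_4)=0$; expanding these with $e_4(k_4)=-2\epsilon_2\omega_{42}(e_2)(k_4-k_2)$ and $e_2(k_4)=e_3(k_4)=e_2(k_2)=e_3(k_2)=0$ forces $e_2(\omega_{42}(e_2))=e_3(\omega_{42}(e_2))=0$. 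To finish, I would exploit that the second fundamental form is diagonal, so the mixed curvature components $R(e_2,e_3,e_3,e_4)$, $R(e_2,e_3,e_2,e_4)$, and $R(e_2,e_4,e_4,e_3)$ vanish by \eqref{GaussEq}; computing them through \eqref{LeviCivitaMEq1} produces relations of the shape $\omega_{23}(e_4)\bigl(\omega_{24}(e_2)+\omega_{34}(e_3)\bigr)=0$ together with first-order equations for $\omega_{24}(e_2)$, which — combined with the derivative constraints above and the symmetry $\epsilon_2\omega_{42}(e_2)=\epsilon_3\omega_{43}(e_3)$ — close up to give $\omega_{42}(e_2)=0$. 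I expect the genuinely delicate point to be exactly this curvature computation, and in particular the need to treat the spacelike ($\epsilon_2=\epsilon_3$) and timelike ($\epsilon_2=-\epsilon_3$) eigenplane cases separately, since the identity coming from $R(e_2,e_4,e_4,e_3)$ degenerates in the timelike case and must be replaced by one of the other two components.
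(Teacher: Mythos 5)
Your reduction is correct and in fact cleaner than the paper's set-up: Codazzi with $k_2=k_3$ gives $e_2(k_2)=e_3(k_2)=0$ for free, the trace identity $2k_2+k_4=-3k_1$ then handles $e_2(k_4)$, $e_3(k_4)$ and ties $e_4(k_4)$ to $e_4(k_2)$, and the whole lemma really does collapse to $\omega_{42}(e_2)=0$. The gap is the last step. The mixed Gauss components you invoke do not close up: expanding them, $R(e_2,e_4,e_4,e_3)=0$ reduces to $\omega_{23}(e_4)\bigl(\omega_{43}(e_3)-\epsilon_2\epsilon_3\omega_{42}(e_2)\bigr)=0$, which is identically satisfied because of the very symmetry $\epsilon_2\omega_{42}(e_2)=\epsilon_3\omega_{43}(e_3)$ you derived; $R(e_2,e_3,e_3,e_4)=0$ reduces to $e_2(\omega_{42}(e_2))=0$, which is the same information as your bracket argument; and $R(e_1,e_2,e_2,e_4)=0$ gives only the propagation equation $e_1(\omega_{42}(e_2))+\epsilon_2\omega_{12}(e_2)\omega_{42}(e_2)=0$. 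So Gauss--Codazzi yield first-order linear evolution equations for $\omega_{42}(e_2)$ along every direction, but never an algebraic relation forcing it to vanish; a nonzero solution of that system is not excluded by anything you have written. Note also that your endgame uses biconservativity only through the trace identity and Lemma \ref{LABELLEMMA1}, so if it worked it would prove the conclusion for an essentially arbitrary hypersurface with this eigenvalue pattern --- a strong heuristic that an ingredient is missing.

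The paper closes exactly this hole by a different mechanism: it differentiates the trace identity repeatedly along $e_1$, using the Codazzi relations $e_1(k_A)=\epsilon_A\omega_{1A}(e_A)(k_1-k_A)$ and the Gauss equations for $e_1(\omega_{12}(e_2))$ and $e_1(\omega_{14}(e_4))$, then eliminates first $\omega_{14}(e_4)$ and then $\omega_{12}(e_2)$ from the resulting system to obtain a nontrivial degree-$14$ polynomial identity $\sum_i K_i\,a^i=0$ for $a=k_2$, whose coefficients $K_i$ depend only on $H,e_1(H),e_1^2(H),e_1^3(H)$. Since Remark \ref{AnImpRemarkTOGIBE1} shows that $e_A$ annihilates all of these quantities for $A=2,3,4$, the polynomial relation forces $e_A(k_2)=0$ --- in particular the missing $e_4(k_2)=0$ --- and the trace identity then gives $e_A(k_4)=0$. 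If you want to keep your (nicer) reduction to the single claim $\omega_{42}(e_2)=0$, you still need some version of this elimination argument, or another genuinely new input beyond the Gauss equation, to establish it.
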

\begin{proof}
We consider the case $\langle e_2,e_2\rangle=-\langle e_3,e_3\rangle=\varepsilon$. The result for the other case follows from an analogous computation.  Now, if one of $k_2$ and $k_4$ vanishes identically on an open subset $\widetilde{\mathcal O}$ of $M$, then equations
$$\left.k_2\right|_{\widetilde{\mathcal O}}=\left.k_4\right|_{\widetilde{\mathcal O}}=0$$ 
immediately follows from equations \eqref{HangiHungiHsQQQQ1} and \eqref{HangiHungiHs}. Therefore, we consider a component ${\mathcal O}$ of the open subset  $\{p\in M|k_2(p), k_4(p)\neq 0\}$. We define smooth functions $a,b,\alpha,\beta$ by
$$a=\left.k_2\right|_{\mathcal O},\quad b=\left.k_4\right|_{\mathcal O},\quad \alpha=\left.\omega_{12}(e_2)\right|_{\mathcal O},\quad \beta=\left.\omega_{14}(e_4)\right|_{\mathcal O}.$$ Then, equation \eqref{HangiHungiHsQQQQ1} becomes
\begin{eqnarray}
\label{Equation1}2k_2+k_3=6H. 
\end{eqnarray}
Furthermore, the Codazzi equation \eqref{CodazziForm1} takes the form
\begin{eqnarray}
\label{Codazzi1} e_1(a)&=&-\varepsilon(2H+a),\\
\label{Codazzi2} e_1(b)&=&\varepsilon(2H+b)
\end{eqnarray}
and the Gauss equation \eqref{MinkGaussEquation} gives
\begin{eqnarray}
\label{Gauss1} e_1(\alpha)&=&2aH-\varepsilon\alpha^2,\\
\label{Gauss2}e_1(\beta)&=&-2bH+\varepsilon\beta^2.
\end{eqnarray}
By applying $e_1$ to equation \eqref{Equation1} and using equations \eqref{Codazzi1} and \eqref{Codazzi2}, we get
\begin{eqnarray}
\label{Equation2} -2 \epsilon  \alpha (a+2 H)+\epsilon  \beta (b+2 H)=6 e_1(H).
\end{eqnarray}
By applying $e_1$ to equation \eqref{Equation2} twice and using equations \eqref{Codazzi1}-\eqref{Gauss2} in the obtained \linebreak equations, we get
\begin{align}\label{Equation3}
\begin{split}
\alpha ^2 (4 a+8 H)-4 \epsilon  a^2 H-8 \epsilon  a H^2+&\beta ^2 (2 b+4 H)-4 \epsilon  b H^2-2 \epsilon  b^2 H\\
&-4 \epsilon  \alpha  e_1(H)+2 \epsilon  \beta  e_1(H)=6 e_1^2(H)
\end{split}
\end{align}
and
\begin{align}\label{Equation4}
\begin{split}
\beta ^3 (6 \epsilon  b+12 \epsilon  H)+\alpha ^3 (-12 \epsilon  a-24 \epsilon  H)+12 \alpha ^2 e_1(H)+6 \beta ^2 e_1(H)\\
+\alpha  \left(56 a H^2+24 a^2 H-4 \epsilon  e_1^2(H)+16 H^3\right)+\beta  \left(-28 b H^2-12 b^2 H+2 \epsilon  e_1^2(H)-8 H^3\right)\\
-4 \epsilon  a^2 e_1(H)-24 \epsilon  a H e_1(H)-2 \epsilon  b^2 e_1(H)-12 \epsilon  b H e_1(H)=6 e_1^3(H).
\end{split}
\end{align}
From equations \eqref{Equation1} and \eqref{Equation2}, we get
$$\beta=-\frac{\alpha  (a+2 H)+3 \epsilon  e_1(H)}{a-4 H}.$$
We use $\beta$ in equations \eqref{Equation3} and \eqref{Equation4} to get 
\begin{align}\label{Equation3a}
\begin{split}
-a \left(e_1^2(H)+5 \epsilon  \alpha  e_1(H)+4 H \alpha ^2+48 \epsilon  H^3\right)+16 \epsilon  a^2 H^2-2 \epsilon  a^3 H-7 e_1(H)^2&\\-H \left(6 \epsilon  \alpha  e_1(H)-4 e_1^2(H)\right)-8 H^2 \alpha ^2+64 \epsilon  H^4&=0
\end{split}
\end{align}
and
\begin{align}\label{Equation4a}
\begin{split}
48 a^3 H (\epsilon  e_1(H)-2 H \alpha )+2 a^4 (6 H \alpha -\epsilon  e_1(H))+a^2 (-e_1^3(H)-\epsilon  \alpha  e_1^2(H)+21 \alpha ^2 e_1(H)&\\
-388 \epsilon  H^2 e_1(H)+24 \epsilon  H \alpha ^3+144 H^3 \alpha )+a (1312 \epsilon  H^3 e_1(H)&\\
+e_1(H) (60 \epsilon  \alpha  e_1(H)-e_1^2(H))+H (8 H^{(3)}+6 \epsilon  \alpha  e_1^2(H)+60 \alpha ^2 e_1(H))+24 \epsilon  H^2 \alpha ^3+384 H^4 \alpha )&\\
-1600 \epsilon  H^4 e_1(H)+63 e_1(H)^3+4 H e_1(H) (e_1^2(H)+30 \epsilon  \alpha  e_1(H))&\\
+4 H^2 (-4 e_1^3(H)-2 \epsilon  \alpha  e_1^2(H)+27 \alpha ^2 e_1(H))-48 \epsilon  H^3 \alpha ^3-768 H^5 \alpha&=0 
\end{split}
\end{align}
After a long computation, we remove $\alpha$ from these equations and get a non-trivial 14th degree polynomial equation of $a$, expressed as  
$$\sum\limits_{i=0}^14K_i(H,e_1(H),e_1e_1(H),e_1e_1e_1(H))a^i=0,$$
with coefficients $K_{14}=387200 \epsilon  H^6 e_1(H)^2$ and $K_{13}=-7040000 \epsilon  H(u)^7 e_1(H)^2$. This yields that the principal curvature $k_2$ has the form $k_2=a(H,e_1(H),e_1^2(H),e_1^3(H))$ in $\mathcal O$. Taking into account Remark \ref{AnImpRemarkTOGIBE1}, we obtain $e_A(k_2)=0,A=2,3,4$ on $\mathcal O$ whereas due to equation \eqref{Equation1}, we also have $e_A(k_4)=0$. An analogous computation yields the same result if $\langle e_2,e_2\rangle=\langle e_3,e_3\rangle=\varepsilon$. Hence, the proof is completed.
\end{proof} 
Next, we would like to give the following result obtained from the above Lemma \ref{LemmaPrCurvatures}.
\begin{Corol}\label{CORRLODLASD}
Let $M$ be a biconservative hypersurface of index $2$ in $\mathbb E^5_2$ with exactly 3 distinct\linebreak principal curvatures $k_1,k_2=k_3,k_4$. Then, the corresponding principal directions $e_1=\partial_s,e_2,e_3,e_4$ satisfy
\begin{subequations}
\begin{eqnarray}
\nabla_{e_1}e_i=0,\quad i=1,2,3,4,\\
\label{EQ317B}\nabla_{e_A}e_1=\epsilon_A\omega_{1A}(e_A)e_A,\quad A=2,3,4,\\
\label{EQ317C}\nabla_{e_{B}}e_{4} =  0,\quad B=2,3,\\
\label{EQ317D}\nabla_{e_{4}}e_{4} =  -\epsilon_1\omega_{14}(e_4)e_1,\\
\nabla_{e_{2}}e_{2} = -\epsilon_1 \omega_{12}(e_{2})e_{1} + \epsilon_3 \omega_{23}(e_{2})e_{3}, \quad \nabla_{e_{2}}e_{3} = - \epsilon_2 \omega_{23}(e_{2})e_{2},\\
\nabla_{e_{3}}e_{2} = - \epsilon_3 \omega_{23}(e_{3})e_{3}, \quad \nabla_{e_{3}}e_{3} = -\epsilon_1 \omega_{13}(e_{3})e_{1} + \epsilon_2 \omega_{23}(e_{3})e_{2},  \\
\nabla_{e_4}e_{2} =  - \epsilon_3 \omega_{23}(e_4)e_{3},\quad \nabla_{e_4}e_{3} =  - \epsilon_2\omega_{23}(e_4)e_{2}.
\end{eqnarray}
\end{subequations}
Moreover, components of connection forms satisfy 
\begin{equation}\label{omega2omega3REL}
e_A(\omega_{12}(e_2))=0 \quad \mbox{ and }\quad \varepsilon_2\omega_{12}(e_2)=\varepsilon_3\omega_{13}(e_3).
\end{equation}
\end{Corol}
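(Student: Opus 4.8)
The plan is to obtain every covariant derivative by simply substituting into the Levi-Civita formulas \eqref{LeviCivitaMEq1}, so the entire task reduces to deciding which components of the connection forms survive. The inputs are Lemma \ref{LABELLEMMA1} (which already controls the $\omega_{1A}$), Lemma \ref{LemmaPrCurvatures} (which gives $e_A(k_j)=0$ for $A=2,3,4$ and \emph{every} $j$, since $k_3=k_2$ forces $e_A(k_3)=e_A(k_2)=0$), and the two Codazzi identities \eqref{CodazziForm1}--\eqref{CodazziForm2}. First I would record the immediate consequences: \eqref{ALLOFEQUATIONSCOMFOTM} says $\omega_{1A}(e_1)=0$ and $\omega_{1A}(e_B)=0$ for distinct $A,B\in\{2,3,4\}$, so each $\omega_{1A}$ is a multiple of $\theta_A$; moreover its last line together with $k_2\neq k_4$ and $k_3\neq k_4$ gives $\omega_{24}(e_1)=\omega_{34}(e_1)=0$.

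The bulk of the bookkeeping is to show that all remaining components of $\omega_{24}$ and $\omega_{34}$ vanish. For the ``diagonal'' ones I would feed $e_A(k_j)=0$ into \eqref{CodazziForm1}: choosing $(i,j)=(2,4),(3,4),(4,2),(4,3)$ and using $k_2,k_3\neq k_4$ yields $\omega_{24}(e_4)=\omega_{34}(e_4)=\omega_{24}(e_2)=\omega_{34}(e_3)=0$. For the two cross terms I would use \eqref{CodazziForm2} with the repeated curvature: with $(j,l,i)=(4,2,3)$ the right-hand side is $\omega_{32}(e_4)(k_3-k_2)=0$, so $\omega_{24}(e_3)=0$, and symmetrically $(j,l,i)=(4,3,2)$ gives $\omega_{34}(e_2)=0$. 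Substituting these vanishings, together with the $\omega_{1A}$ structure, into \eqref{LeviCivitaMEq1} then produces every displayed formula except those on the $e_1$-row.

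The main obstacle is the row $\nabla_{e_1}e_i=0$, since the only unknown surviving there is $\omega_{23}(e_1)$, and one checks directly that Codazzi never constrains it: the relevant instance of \eqref{CodazziForm2} degenerates to $0=0$ once $k_2=k_3$. This component is pure gauge. Because $k_2=k_3$, the frame $\{e_2,e_3\}$ of the two-dimensional eigenspace may be rotated by any parameter $\phi$; such a rotation fixes $e_1,e_4$ and the principal data, keeps $\{e_2,e_3\}$ a principal frame (so that the conclusions of Lemma \ref{LABELLEMMA1} still apply), leaves $\omega_{24},\omega_{34}$ identically zero, and shifts $\omega_{23}(e_1)$ by $e_1(\phi)$. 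Solving $e_1(\phi)=-\omega_{23}(e_1)$ along the integral curves of $e_1=\partial_s$ therefore lets me choose $e_2,e_3$ parallel along $e_1$, i.e.\ $\omega_{23}(e_1)=0$, after which $\nabla_{e_1}e_i=0$ for all $i$.

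Finally, for \eqref{omega2omega3REL} I would apply \eqref{CodazziForm1} with $i=1$ to write $\omega_{12}(e_2)=e_1(k_2)/[\varepsilon_2(k_1-k_2)]$ and $\omega_{13}(e_3)=e_1(k_3)/[\varepsilon_3(k_1-k_3)]$; since $k_2=k_3$ these immediately give $\varepsilon_2\omega_{12}(e_2)=\varepsilon_3\omega_{13}(e_3)$. For $e_A(\omega_{12}(e_2))=0$ I would differentiate that quotient: $e_A(k_1)=e_A(k_2)=0$ by Lemma \ref{LemmaPrCurvatures}, while $e_A(e_1(k_2))=0$ follows from the bracket $[e_A,e_1]=\varepsilon_A\omega_{1A}(e_A)e_A$ (using the $\nabla_{e_1}e_A=0$ just established) exactly as in Remark \ref{AnImpRemarkTOGIBE1}, so numerator and denominator are both annihilated by each $e_A$ and hence so is the quotient.
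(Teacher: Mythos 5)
Your proposal is correct and follows the same basic route as the paper's own (very terse) proof: Lemma \ref{LemmaPrCurvatures} together with the Codazzi identities \eqref{CodazziFormaLL} kills every component of $\omega_{24}$ and $\omega_{34}$, and substitution into \eqref{LeviCivitaMEq1} together with Lemma \ref{LABELLEMMA1} yields the displayed formulas, while \eqref{omega2omega3REL} comes from \eqref{CodazziForm1} with $i=1$ and $k_2=k_3$. The one place where you genuinely go beyond the paper is the component $\omega_{23}(e_1)$. The paper's proof simply says that combining the vanishings with Lemma \ref{LABELLEMMA1} and \eqref{LeviCivitaMEq1} gives the required equations, but the last line of \eqref{ALLOFEQUATIONSCOMFOTM} only yields $\omega_{23}(e_1)(k_2-k_3)=0$, which is vacuous precisely because $k_2=k_3$; hence $\nabla_{e_1}e_2=-\epsilon\,\omega_{23}(e_1)e_3$ is not forced to vanish by anything the paper cites. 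Your observation that this component is pure gauge --- rotate (or boost, when $\varepsilon_2=-\varepsilon_3$) the frame of the repeated eigenspace by a parameter $\phi$ solving $e_1(\phi)=-\omega_{23}(e_1)$ along the integral curves of $e_1=\partial_s$, which preserves the principal-frame property, the hypotheses and conclusions of Lemma \ref{LABELLEMMA1}, and the vanishing of $\omega_{24}$ and $\omega_{34}$ --- is exactly the right repair, and some such normalization is needed for the first line of the corollary to hold as stated. The remaining bookkeeping in your write-up (the index choices in \eqref{CodazziForm1} and \eqref{CodazziForm2}, the use of $e_A(k_3)=e_A(k_2)=0$, and the bracket argument giving $e_A(\omega_{12}(e_2))=0$ in the spirit of Remark \ref{AnImpRemarkTOGIBE1}) matches what the paper intends and is correct.
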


\begin{proof}
Considering Lemma \ref{LemmaPrCurvatures} and Codazzi equations \eqref{CodazziFormaLL}, we obtain $\omega_{24}(e_{2})=\omega_{24}(e_4)=\omega_{34}(e_4)=\omega_{34}(e_2)=\omega_{24}(e_3)=0$ as $k_2=k_3$. Combining these equations with Lemma \ref{LABELLEMMA1} and equation \eqref{LeviCivitaMEq1}, we obtain the required equations. Furthermore, equation \eqref{omega2omega3REL} follows from equation \eqref{CodazziFormaLL}.
\end{proof}

Let us consider the distributions given by
\begin{equation}
D(m) = \mathrm{span}\{e_{2}|_{m}, e_{3}|_{m}\}, \hspace{.2cm} D'(m) = \mathrm{span}\{e_{4}\}. 
\end{equation}
The following lemma follows from a direct computation using Corollary \ref{CORRLODLASD}.
\begin{Lemma}
The distributions $D$ and $D'$ are involutive. 
\end{Lemma}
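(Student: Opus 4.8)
The plan is to check involutivity directly via the torsion-free identity $[X,Y]=\nabla_X Y-\nabla_Y X$, reading the relevant covariant derivatives straight off Corollary \ref{CORRLODLASD}.

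First I would dispose of $D'=\mathrm{span}\{e_4\}$, which has rank one. Any two sections of $D'$ are pointwise proportional, so by the Leibniz rule for the bracket, $[fe_4,ge_4]$ is again a multiple of $e_4$; hence $D'$ is trivially involutive and requires no computation.

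The only real content is the rank-two distribution $D=\mathrm{span}\{e_2,e_3\}$, for which involutivity reduces to showing $[e_2,e_3]\in D$. Using Corollary \ref{CORRLODLASD} I have
\[
\nabla_{e_2}e_3=-\epsilon_2\omega_{23}(e_2)e_2,\qquad \nabla_{e_3}e_2=-\epsilon_3\omega_{23}(e_3)e_3,
\]
so that
\[
[e_2,e_3]=\nabla_{e_2}e_3-\nabla_{e_3}e_2=-\epsilon_2\omega_{23}(e_2)e_2+\epsilon_3\omega_{23}(e_3)e_3.
\]
This is a linear combination of $e_2$ and $e_3$ alone, hence lies in $D$, proving that $D$ is involutive.

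There is essentially no obstacle here: the whole point is that Corollary \ref{CORRLODLASD} has already established that $\nabla_{e_2}e_3$ and $\nabla_{e_3}e_2$ carry no $e_1$- or $e_4$-component, because the connection forms $\omega_{12},\omega_{13}$ (which would mix $D$ with $e_1$) and $\omega_{24},\omega_{34}$ (which would mix $D$ with $e_4$) are absent from these two particular derivatives. Thus the only verification needed is that these potentially dangerous components vanish, and that is precisely what the corollary supplies; the remaining step is the one-line bracket computation above.
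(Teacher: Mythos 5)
Your proof is correct and follows the same route the paper intends: the paper simply asserts that the lemma "follows from a direct computation using Corollary \ref{CORRLODLASD}," and your bracket computation $[e_2,e_3]=\nabla_{e_2}e_3-\nabla_{e_3}e_2=-\epsilon_2\omega_{23}(e_2)e_2+\epsilon_3\omega_{23}(e_3)e_3\in D$, together with the observation that the rank-one distribution $D'$ is automatically involutive, is exactly that computation made explicit.
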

Let $x$ be a local parametrization of a neighbourhood of $m$ in $M$ where $m\in M$. Consider the integral submanifold of $D$ passing through $m$. We put $f_{1} = e_{2}|_{\tilde{M}}$, $f_{2} = e_{3}|_{\tilde{M}}$, $f_{3} = e_{1}|_{\tilde{M}}$, $f_{4} = e_{4}|_{\tilde{M}}$, $f_{5} = N|_{\tilde{M}}$ as local orthonormal frame field, consisting of restriction of vector fields $e_{2}, e_{3}, e_{1}, e_{4}, N$ to $\tilde{M}$. Then $\{f_{1}, f_{2}\}$ spans the tangent space of $\tilde{M}$ and $\{f_{3}, f_4, f_{5}\}$ spans the normal space of $\tilde{M}$ in $\mathbb E^{5}_{2}$. We denote $\delta_x=\langle f_x,f_x\rangle, x=1,2,\hdots, 5$  which obviously implies that $\delta_1=\varepsilon_2,\ \delta_2=\varepsilon_3,\  \delta_3=\varepsilon_1,\ \delta_4=\varepsilon_4$ and $\delta_5=1$.

It is observed that equation \eqref{EQ317C} yields that $f_4$ is a constant normal vector field on $\tilde{M}$ whereas \eqref{omega2omega3REL} yields that $\omega_{12}(e_{2})$ and $\omega_{13}(e_{3})$ are constant on $\tilde M$. Moreover, we have $\widetilde\nabla_{f_i}f_3= \alpha_0 f_i$ for a constant $\alpha_0$. Furthermore, by the Lemma \ref{LemmaPrCurvatures}, we have $k_2=k_3=\beta_0$ on $\widetilde M$. Hence, we have the following:
\begin{Lemma}\label{LemmaShapeOPS}
Let $f_{1}, f_{2}, f_{3}, f_{4}$ and $f_{5}$ are the vector fields defined above. Then $f_{3}, f_{5}$ are parallel vector fields whereas $f_{4}$ is a constant normal vector field. The matrix representations of the shape operators $\tilde{S}_{f_{3}}$ and $\tilde{S}_{f_{5}}$ are given by
\begin{equation}\label{Shatf3f5}
\tilde{S}_{f_{3}} = \epsilon_{1}\alpha_{0}I,\hspace{.4 cm} \tilde{S}_{f_{5}} = \beta_{0}I
\end{equation}
where $I$ is the identity operator acting on the tangent bundle of $\tilde M$.

\end{Lemma}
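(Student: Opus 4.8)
The plan is to read off all four assertions from the three ambient covariant derivatives $\widetilde\nabla_{f_i}f_3$, $\widetilde\nabla_{f_i}f_4$, $\widetilde\nabla_{f_i}f_5$ taken along the two tangent directions $f_1=e_2|_{\tilde M}$ and $f_2=e_3|_{\tilde M}$, since $\{f_1,f_2\}$ spans $T\tilde M$ and $\{f_3,f_4,f_5\}$ spans the normal bundle. For a surface $\tilde M\subset\mathbb E^5_2$ the Gauss--Weingarten decomposition $\widetilde\nabla_X\xi=-\tilde S_\xi X+\nabla^\perp_X\xi$ splits each such derivative into a tangential part (which records the shape operator $\tilde S_\xi$) and a normal part (whose vanishing is exactly parallelism of $\xi$ in the normal bundle). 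Thus every claim reduces to computing these three derivatives, and each computation is carried out by applying the Gauss and Weingarten formulas \eqref{MEtomGauss}, \eqref{MEtomWeingarten} of the original hypersurface $M$ to the connection data of Corollary~\ref{CORRLODLASD}, remembering that $h(e_i,e_j)=\langle Se_i,e_j\rangle N$ is diagonal.

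First I would treat $f_4=e_4|_{\tilde M}$. By \eqref{EQ317C} we have $\nabla_{e_B}e_4=0$ for $B=2,3$, while $h(e_B,e_4)=\langle Se_B,e_4\rangle N=0$ because $e_4$ is $S$-orthogonal to $e_2,e_3$; hence the Gauss formula gives $\widetilde\nabla_{f_i}f_4=0$ for $i=1,2$. So $f_4$ has vanishing ambient derivative along $\tilde M$, i.e. it is a constant vector field of $\mathbb E^5_2$, and in particular its normal connection vanishes, which is the asserted ``constant normal vector field''. Next I would compute $f_5=N|_{\tilde M}$ straight from \eqref{MEtomWeingarten}: $\widetilde\nabla_{f_i}f_5=\widetilde\nabla_{e_i}N=-S(e_i)=-\beta_0 e_i=-\beta_0 f_i$, using $k_2=k_3=\beta_0$ from Lemma~\ref{LemmaPrCurvatures}. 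This is tangent to $\tilde M$, so $\nabla^\perp f_5=0$ (i.e. $f_5$ is parallel) and the tangential part yields $\tilde S_{f_5}=\beta_0 I$.

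Finally, for $f_3=e_1|_{\tilde M}$ the Gauss formula together with \eqref{EQ317B} gives $\widetilde\nabla_{f_1}f_3=\epsilon_2\omega_{12}(e_2)f_1$ and $\widetilde\nabla_{f_2}f_3=\epsilon_3\omega_{13}(e_3)f_2$, which by the identity $\epsilon_2\omega_{12}(e_2)=\epsilon_3\omega_{13}(e_3)$ of \eqref{omega2omega3REL} can be written uniformly as $\widetilde\nabla_{f_i}f_3=\alpha_0 f_i$ with $\alpha_0$ a single function; since this derivative is again purely tangential, $f_3$ is parallel and the Weingarten relation (keeping track of the causal character $\delta_3=\epsilon_1$ of the normal $f_3$) produces $\tilde S_{f_3}=\epsilon_1\alpha_0 I$. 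That both shape operators are \emph{constant} multiples of $I$, rather than merely pointwise proportional to $I$, is where the constancy statements recorded just before the lemma are used: $e_A(k_2)=0$ from Lemma~\ref{LemmaPrCurvatures} makes $\beta_0$ constant on $\tilde M$, and $e_A(\omega_{12}(e_2))=0$ from \eqref{omega2omega3REL} makes $\alpha_0$ constant on $\tilde M$.

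The only genuinely delicate point is checking that no spurious components survive in these three derivatives --- that $\widetilde\nabla_{f_i}f_3$ carries neither an $N$-component nor an $e_4$-component, and that $\widetilde\nabla_{f_i}f_4$ is exactly zero. This is precisely where the reductions feeding Corollary~\ref{CORRLODLASD} are indispensable: the off-diagonal vanishing of the $\omega_{1A}$ from Lemma~\ref{LABELLEMMA1}, the vanishing of $\omega_{24},\omega_{34}$ on the relevant directions obtained in the proof of Corollary~\ref{CORRLODLASD} from $k_2=k_3$, and the diagonality of $h$ collectively force each derivative to collapse to its single surviving tangential term. Granting this, the four conclusions of the lemma follow immediately; the remaining care is purely bookkeeping of the signs $\delta_x$ when converting tangential parts into the self-adjoint operators $\tilde S_{f_3}$ and $\tilde S_{f_5}$.
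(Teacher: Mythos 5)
Your argument is correct and follows essentially the same route as the paper: the paper justifies this lemma in the paragraph immediately preceding it by observing that \eqref{EQ317C} makes $f_4$ constant, that \eqref{omega2omega3REL} and Lemma \ref{LemmaPrCurvatures} make $\omega_{12}(e_2)=\varepsilon_2\varepsilon_3\,\omega_{13}(e_3)$ and $k_2=k_3$ constant on $\tilde M$, and that consequently $\widetilde\nabla_{f_i}f_3$ and $\widetilde\nabla_{f_i}f_5$ are tangential multiples of $f_i$ --- exactly the Gauss--Weingarten computation you spell out in more detail. The only caveat is the sign bookkeeping you already flag, where the paper itself is not fully consistent (compare the pre-lemma statement $\widetilde\nabla_{f_i}f_3=\alpha_0 f_i$ with equation \eqref{CovDerf35Eq1}).
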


From Lemma \ref{LemmaShapeOPS}, we have the following proposition \cite[Lemma 4.2]{MagidIsop}:
\begin{Prop}\label{PROPSTUVCOORDS}
Let $M$ be a biconservative hypersurface of index 2 in $\mathbb E^{5}_{2}$ with diagonalizable shape operator. Then there exists a local coordinate system $(s, t, u, v)$ such that 
\begin{equation}
e_{1} = \frac{\partial}{\partial s}, \hspace{.3 cm} e_{2} = \frac{1}{E_{1}}\frac{\partial}{\partial t},\hspace{.3 cm} e_{3} = \frac{1}{E_{2}}\frac{\partial}{\partial u},\hspace{.3 cm} e_{3} = \frac{1}{E_{3}}\frac{\partial}{\partial v}.
\end{equation}
\end{Prop}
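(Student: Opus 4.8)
The plan is to show that the principal frame $\{e_1,e_2,e_3,e_4\}$ is, up to a rescaling of each vector field, a coordinate frame. From Remark \ref{CoordSystemS} we already have $\theta_1=ds$ and $e_1=\partial_s$, so it remains only to promote $e_2,e_3,e_4$ to multiples of coordinate vector fields. The criterion I would invoke is the classical one: an orthogonal frame is a coordinate frame up to positive scaling if and only if every Lie bracket $[e_i,e_j]$ lies in $\mathrm{span}\{e_i,e_j\}$; equivalently, each dual one-form satisfies $d\theta_i\wedge\theta_i=0$, so that by the Frobenius integrating-factor theorem $\theta_i=E_i\,dx_i$ locally and hence $e_i=E_i^{-1}\partial_{x_i}$. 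Thus the proposition reduces to verifying these bracket relations for the frame supplied by Corollary \ref{CORRLODLASD}.

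First I would read the brackets straight off the connection formulas of Corollary \ref{CORRLODLASD}. Using $\nabla_{e_1}e_i=0$ together with \eqref{EQ317B}, \eqref{EQ317C}, \eqref{EQ317D}, one finds $[e_1,e_A]=-\varepsilon_A\omega_{1A}(e_A)e_A$ for $A=2,3,4$ and $[e_2,e_3]=-\varepsilon_2\omega_{23}(e_2)e_2+\varepsilon_3\omega_{23}(e_3)e_3$, all of which already lie in the required two-planes. The only problematic brackets are $[e_2,e_4]=\varepsilon_3\omega_{23}(e_4)e_3$ and $[e_3,e_4]=\varepsilon_2\omega_{23}(e_4)e_2$, which land in $\mathrm{span}\{e_3\}$ and $\mathrm{span}\{e_2\}$ rather than in $\mathrm{span}\{e_2,e_4\}$ and $\mathrm{span}\{e_3,e_4\}$. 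Everything therefore hinges on the single transversal coefficient $\omega_{23}(e_4)$: if it vanishes, the frame is holonomic up to scaling and the argument closes immediately.

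The main obstacle is to remove $\omega_{23}(e_4)$, and here I would exploit the gauge freedom hidden in the hypothesis $k_2=k_3$. Since $\mathrm{span}\{e_2,e_3\}$ is a degenerate eigenplane of $S$, the pair $(e_2,e_3)$ is determined only up to a rotation $\tilde e_2=\cos\phi\,e_2+\sin\phi\,e_3$, $\tilde e_3=-\sin\phi\,e_2+\cos\phi\,e_3$ (a hyperbolic rotation in the Lorentzian subcase), under which $\omega_{23}$ changes by $d\phi$. I would choose $\phi$ with $e_1(\phi)=0$ and $e_4(\phi)=-\omega_{23}(e_4)$, which annihilates $\omega_{23}(e_4)$ while keeping $\omega_{23}(e_1)=0$, hence preserving $\nabla_{e_1}e_i=0$. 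The compatibility of this overdetermined prescription for $\phi$ is exactly the delicate point: since $[e_1,e_4]=-\varepsilon_4\omega_{14}(e_4)e_4$, the distribution $\mathrm{span}\{e_1,e_4\}$ is integrable, and solvability on its leaves amounts to the identity $e_1(\omega_{23}(e_4))=-\varepsilon_4\omega_{14}(e_4)\,\omega_{23}(e_4)$, which must be extracted from the Gauss equation. Equivalently, and this is the packaging in the cited \cite[Lemma 4.2]{MagidIsop}, one passes to the integral leaf $\tilde M$ of $D=\mathrm{span}\{e_2,e_3\}$: by Lemma \ref{LemmaShapeOPS} it is totally umbilical, hence an open piece of a two-dimensional space form carrying intrinsic coordinates $(t,u)$ with $e_2\parallel\partial_t$, $e_3\parallel\partial_u$, while the parallel normal fields $f_3,f_5$ and the constant field $f_4$ make the leaves parallel copies that the commuting flows of $e_1$ and $e_4$ thread into one chart.

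Once $\omega_{23}(e_4)=0$, every bracket lies in the correct two-plane, so the Frobenius argument of the first paragraph produces functions $s,t,u,v$ with $e_1=\partial_s$, $e_2=E_1^{-1}\partial_t$, $e_3=E_2^{-1}\partial_u$, $e_4=E_3^{-1}\partial_v$. The scale factors are then recovered by integrating first-order equations such as $e_1(\log E_1)=\varepsilon_2\,\omega_{12}(e_2)$ along the coordinate flows, which expresses exactly the vanishing of $[e_1,E_1e_2]$. This yields the asserted coordinate system and completes the proof.
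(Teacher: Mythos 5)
Your argument is correct, but it is organized quite differently from the paper's. The paper's proof is a two-line appeal to the decomposition $TM=\mathrm{span}\{e_1\}\oplus D\oplus D'$: since each of these distributions and each pairwise sum is involutive, the lemma on complementary involutive distributions from \cite{KobayashiBook} (p.~182) together with \cite[Lemma 4.2]{MagidIsop} and Remark \ref{CoordSystemS} yields coordinates with $e_1=\partial_s$, $e_4=E_3^{-1}\partial_v$ and $\mathrm{span}\{\partial_t,\partial_u\}=D$ --- note that the paper never separates $e_2$ and $e_3$ individually, and indeed only the span statement is used later (in deriving $x_{st}=\alpha x_t$, etc.). You instead prove the statement as literally written, via the integrating-factor criterion $d\theta_i\wedge\theta_i=0$ for each line field separately, and you correctly isolate the one genuine obstruction, $\omega_{23}(e_4)$, which the paper's span-level argument silently absorbs into the freedom of rotating $(e_2,e_3)$ inside the umbilical eigenplane. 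Your gauge-fixing step is legitimate: the compatibility condition $e_1(\omega_{23}(e_4))=-\varepsilon_4\,\omega_{14}(e_4)\,\omega_{23}(e_4)$ that you flag as ``to be extracted from the Gauss equation'' does hold, since with a diagonal second fundamental form \eqref{GaussEq} gives $R(e_1,e_4,e_2,e_3)=0$, and expanding $R(e_1,e_4)e_2$ with the connection data of Corollary \ref{CORRLODLASD} produces exactly that identity; one then solves for $\phi$ leafwise on the integrable distribution $\mathrm{span}\{e_1,e_4\}$ with initial data on a transversal integral surface of $D$. So your route costs an extra curvature computation and a rotation of the frame, but it buys the pointwise conclusion $e_2=E_1^{-1}\partial_t$, $e_3=E_2^{-1}\partial_u$ and makes explicit an obstruction that the paper's citation-based proof does not address; both arguments suffice for everything the proposition is used for downstream.
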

\begin{proof}
Let $D = span\{e_{2}|_{m}, e_{3}|_{m}\}$ be a given distribution at any point $m\in M$. Assuming $D^{\perp} = \{e_{3}|_{m}\}$, we get $D$ and $D^{\perp}$, both are complementary to each other at every point of $M$ and they are also involutive. Using (\cite{{KobayashiBook}}, Lemma in page 182), we find that there is orthogonal local coordinate system $(s, t, u, v)$ on $M$ such that $s$ is the coordinate function given in Remark \ref{CoordSystemS}, i.e., $e_{1}=\partial_{s}$. Also, we have $\mathrm{span}\{\partial_t,\partial_u,\partial_v\}=D\oplus D^{\perp}$, where $D\oplus D^{\perp}$ denotes direct sum of the involutive distributions $D$ and $D^{\perp}$. Since distributions $D$ and $D^{\perp}$ are involutive, we may re-define $t,u,v$ such that $\mathrm{span}\{\partial_t,\partial_u\}=D$ and $e_4=(E_3)^{-1}\partial_v$ for a smooth non-vanishing function $E_3$ \cite[Lemma 4.2]{MagidIsop}.
\end{proof}

Now, we obtain the local parametrization of biconservative hypersurfaces of index 2 in $\mathbb E^{5}_{2}$ with diagonalizable shape operator.
\begin{Prop}
Let $M$ be a biconservative hypersurface of index 2 in $\mathbb E^{5}_{2}$ such that, with respect to the orthonormal frame field $\{e_1=\frac{\nabla H}{\|\nabla H\|}, e_{2}, e_{3}, e_{4}\}$, its shape operator is given by
\begin{center}
$S =
 \begin{pmatrix}
  k_{1} & 0 & 0 & 0\\
  0 & k_{2} &  0 &  0\\
  0 &  0 &  k_{2} &  0\\
  0 &  0 &  0 & k_{4}
 \end{pmatrix}$,
\end{center}  
where $k_{1} = -4H$ and $k_{2} \neq k_{4}$. Then, $M$ has the following local parametrization
\begin{equation}\label{LOCALARAMETRIZ}
x(s, t, u, v) = \phi_{1}(s)\Theta_{1}(t, u) + \phi_{2}(s)\Theta_{2}(v) + \Gamma(s)
\end{equation} 
for some $\mathbb E^{5}_{2}$-valued functions $\Theta_{1}$, $\Theta_{2}$, $\Gamma$ and some smooth real valued functions $\phi_{1}$, $\phi_{2}$.
\begin{proof}
From Lemma \ref{LemmaPrCurvatures} and Codazzi equations \eqref{CodazziFormaLL}, we obtain 
\begin{equation}
e_{i}(\omega_{1j}(e_{j})) = 0,\hspace{.5 cm} i, j = 2, 3, 4.
\end{equation}
Therefore, we put
\begin{equation}
\varepsilon_2\omega_{12}(e_{2}) = \varepsilon_3\omega_{13}(e_{3}) = \alpha (s),\hspace{.5 cm} \varepsilon_4\omega_{14}(e_{4}) = \beta(s)
\end{equation}
for some smooth functions $\alpha$ and $\beta$. From Corollary \ref{CORRLODLASD} and the coordinate system given in the Proposition \ref{PROPSTUVCOORDS}, we obtain
\begin{equation}
x_{st} = \alpha(s)x_{t},\hspace{.2 cm} x_{su} = \alpha(s)x_{u},\hspace{.2 cm} x_{sv} = \beta(s)x_{v}.
\end{equation}
Integrating these equations, we obtain the result.
\end{proof}
\end{Prop}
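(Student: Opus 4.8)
The plan is to convert the intrinsic frame data of Corollary \ref{CORRLODLASD} into a linear PDE system for the position vector $x$ and then integrate. First I would fix the coordinates $(s,t,u,v)$ from Proposition \ref{PROPSTUVCOORDS}, so that $e_1 = \partial_s$, $e_2 = E_1^{-1}\partial_t$, $e_3 = E_2^{-1}\partial_u$, $e_4 = E_3^{-1}\partial_v$ for positive functions $E_1, E_2, E_3$. By Lemma \ref{LemmaPrCurvatures} we have $e_i(k_j)=0$ for $i=2,3,4$, and feeding this into the Codazzi relations \eqref{CodazziFormaLL} forces $e_i(\omega_{1j}(e_j))=0$; hence the only surviving connection components $\omega_{12}(e_2),\omega_{13}(e_3),\omega_{14}(e_4)$ are functions of $s$ alone. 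Combined with \eqref{omega2omega3REL} this lets me set $\varepsilon_2\omega_{12}(e_2)=\varepsilon_3\omega_{13}(e_3)=\alpha(s)$ and $\varepsilon_4\omega_{14}(e_4)=\beta(s)$.

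Next I would extract the three governing equations
\[
x_{st} = \alpha(s)\,x_t, \qquad x_{su} = \alpha(s)\,x_u, \qquad x_{sv} = \beta(s)\,x_v.
\]
The mechanism is the Gauss formula \eqref{MEtomGauss}: since the shape operator is diagonal, $h(e_1,e_A)=0$ for $A=2,3,4$, and Corollary \ref{CORRLODLASD} gives $\nabla_{e_1}e_A=0$, so $\widetilde\nabla_{e_1}e_A=0$. Writing $x_t = E_1 e_2$ and differentiating in $s$ yields $x_{st} = (e_1 E_1)e_2 + E_1\widetilde\nabla_{e_1}e_2 = (e_1\log E_1)\,x_t$; the bracket computation $[e_1,e_2] = -(e_1\log E_1)e_2 = -\varepsilon_2\omega_{12}(e_2)e_2$ then identifies $e_1\log E_1 = \alpha(s)$. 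Repeating the argument along the $u$- and $v$-lines produces the remaining two equations, with coefficient $\alpha$ again for the $u$-line (by \eqref{omega2omega3REL}) and $\beta$ for the $v$-line.

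Finally I would integrate. Holding $t,u,v$ fixed, each equation is a first-order linear ODE in $s$ for the $\mathbb E^5_2$-valued function $x_t$ (resp.\ $x_u$, $x_v$), so $x_t = \phi_1(s)A$, $x_u=\phi_1(s)B$, $x_v=\phi_2(s)C$, with $\phi_1 = \exp\int\alpha\,ds$, $\phi_2 = \exp\int\beta\,ds$ and ``constants'' $A,B,C$ depending only on $(t,u,v)$. Equality of mixed partials $x_{tv}=x_{vt}$ and $x_{uv}=x_{vu}$ gives $\phi_1 A_v = \phi_2 C_t$ and $\phi_1 B_v = \phi_2 C_u$; since $k_2\neq k_4$ forces $\alpha\not\equiv\beta$, the ratio $\phi_1/\phi_2$ is nonconstant in $s$ whereas $A_v,B_v,C_t,C_u$ are $s$-independent, so all four vanish. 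Thus $A,B$ depend only on $(t,u)$ and $C$ only on $v$; integrating $x_t,x_u$ against the compatibility $A_u=B_t$ produces a function $\Theta_1(t,u)$, integrating $x_v$ produces $\Theta_2(v)$, and the $s$-dependent remainder is absorbed into $\Gamma(s)$, yielding \eqref{LOCALARAMETRIZ}.

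I expect the main obstacle to be the middle step rather than the final integration: one must pass carefully from the intrinsic identities of Corollary \ref{CORRLODLASD} to ambient PDEs, in particular verifying $\widetilde\nabla_{e_1}e_A=0$ (which hinges on the diagonal form of $S$) and matching the logarithmic-derivative coefficients $e_1\log E_i$ to the connection forms $\alpha(s),\beta(s)$. Once the separation $A_v=B_v=C_t=C_u=0$ is secured, the remaining integration is routine.
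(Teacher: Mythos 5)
Your route is the same as the paper's: show that the surviving connection components $\omega_{12}(e_2)=\omega_{13}(e_3)$ and $\omega_{14}(e_4)$ depend on $s$ alone, translate this into the ambient PDEs $x_{st}=\alpha x_t$, $x_{su}=\alpha x_u$, $x_{sv}=\beta x_v$, and integrate. The paper compresses the last step into one sentence; your expansion of the first two steps is correct, and in particular the passage $\widetilde\nabla_{e_1}e_A=\nabla_{e_1}e_A+h(e_1,e_A)=0$ (diagonal $S$ plus Corollary \ref{CORRLODLASD}) together with the identification $e_1(\log E_i)=\alpha$ or $\beta$ via the bracket $[e_1,e_A]$ is exactly what is needed.

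The one step I would not accept as written is the separation of variables. You deduce $A_v=B_v=C_t=C_u=0$ from the claim that $k_2\neq k_4$ forces $\alpha\not\equiv\beta$, so that $\phi_1/\phi_2$ is nonconstant in $s$. That implication is neither proved nor obvious: Codazzi \eqref{CodazziFormaLL} only gives $e_1(k_2)=\alpha(k_1-k_2)$ and $e_1(k_4)=\beta(k_1-k_4)$, which is a priori consistent with $\alpha\equiv\beta$ while $k_2\neq k_4$, and the Gauss equation yields only product relations of the type $\alpha\beta=\pm k_2k_4$, which again do not exclude $\alpha=\beta$. Fortunately the conclusion you need holds unconditionally: compute $x_{tv}=\widetilde\nabla_{\partial_v}(E_1e_2)$ directly. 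Corollary \ref{CORRLODLASD} gives $\nabla_{e_2}e_4=0$ and $\nabla_{e_4}e_2=-\epsilon_3\omega_{23}(e_4)e_3$, the diagonal shape operator gives $h(e_2,e_4)=0$, and expanding $[\partial_t,\partial_v]=[E_1e_2,E_3e_4]=0$ in the independent directions $e_2,e_3,e_4$ forces $\omega_{23}(e_4)=\partial_v(E_1)=\partial_t(E_3)=0$. Hence $x_{tv}=0$, and likewise $x_{uv}=0$, which gives $A_v=B_v=C_t=C_u=0$ with no nondegeneracy hypothesis on $\alpha$ and $\beta$. With that replacement, your integration to the form \eqref{LOCALARAMETRIZ} goes through.
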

\begin{Corol}\label{COROLINTSUB}
A slice given by $y(t, u) = x(c_{1}, t, u, c_{2})$ is an integral submanifold of the given distribution $D$.
\end{Corol}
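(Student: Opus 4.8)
The plan is to read off, directly from the separated form of the parametrization \eqref{LOCALARAMETRIZ}, that the tangent plane of the slice at each of its points coincides with the distribution $D$. The key preliminary observation is that the coordinate system of Proposition \ref{PROPSTUVCOORDS} was constructed precisely so that $e_2 = E_1^{-1}\partial_t$ and $e_3 = E_2^{-1}\partial_u$, whence $D = \mathrm{span}\{e_2, e_3\} = \mathrm{span}\{\partial_t, \partial_u\}$ at every point of $M$. Thus it suffices to check that the two coordinate directions tangent to the slice are exactly $\partial_t$ and $\partial_u$ and that they span a $2$-plane.

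First I would differentiate \eqref{LOCALARAMETRIZ} in the $t$ and $u$ directions, obtaining $x_t = \phi_1(s)\,\partial_t\Theta_1(t,u)$ and $x_u = \phi_1(s)\,\partial_u\Theta_1(t,u)$. The point to extract from these formulas is that neither expression involves $v$, and that the dependence on $s$ enters only through the scalar factor $\phi_1(s)$; the $\phi_2(s)\Theta_2(v)$ and $\Gamma(s)$ summands contribute nothing upon differentiating in $t$ or $u$. Consequently, for the slice $y(t,u)=x(c_1,t,u,c_2)$ obtained by freezing $s=c_1$ and $v=c_2$, one has
$$y_t = \phi_1(c_1)\,\partial_t\Theta_1(t,u) = \left.\partial_t\right|_{\text{slice}}, \qquad y_u = \phi_1(c_1)\,\partial_u\Theta_1(t,u) = \left.\partial_u\right|_{\text{slice}},$$
so the intrinsic coordinate fields of the slice are literally the restrictions of the ambient fields $\partial_t = x_t$ and $\partial_u = x_u$. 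Hence at each point of the slice the tangent plane is $\mathrm{span}\{y_t, y_u\} = \mathrm{span}\{\partial_t, \partial_u\} = D$, which is the defining property of an integral submanifold of $D$.

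The only point that genuinely needs to be addressed, rather than merely stated, is regularity: one must know that $y_t$ and $y_u$ are linearly independent, so that the slice is a bona fide $2$-dimensional submanifold whose tangent plane is all of $D$ rather than a proper subspace. This follows at once, since $\partial_t$ and $\partial_u$ are nonzero multiples of the frame vectors $e_2, e_3$, which are linearly independent (being orthonormal up to sign), equivalently because $\phi_1(c_1)\neq 0$ on the coordinate neighbourhood. Since $D$ is involutive, as already established, this identification of tangent spaces exhibits the slice as an integral submanifold. I expect no real obstacle here: once the variable-separated structure \eqref{LOCALARAMETRIZ} is available, the statement is essentially a restatement of the fact that the coordinates of Proposition \ref{PROPSTUVCOORDS} were chosen so that $\mathrm{span}\{\partial_t,\partial_u\}=D$, and the corollary is almost immediate.
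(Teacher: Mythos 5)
Your argument is correct and is exactly the (implicit) justification behind the paper's statement, which is given there without proof as an immediate consequence of Proposition \ref{PROPSTUVCOORDS}: since $e_2=E_1^{-1}\partial_t$ and $e_3=E_2^{-1}\partial_u$, the tangent plane of the slice is $\mathrm{span}\{x_t,x_u\}=\mathrm{span}\{e_2,e_3\}=D$. Your additional remark on regularity ($\phi_1(c_1)\neq 0$, equivalently the non-vanishing of $E_1,E_2$) is a harmless but welcome elaboration.
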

\begin{Corol}\label{COROLINTCURVE}
The curve $\alpha(v) = x(c_{1}, c_{2}, c_{3}, v)$ is an integral curve of $e_{4}$.
\end{Corol}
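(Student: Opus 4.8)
The plan is to compute the velocity vector of the curve directly and to recognize it, via the coordinate frame of Proposition~\ref{PROPSTUVCOORDS}, as a multiple of $e_4$. Since $x$ is a local parametrization of $M$, the curve $\alpha(v)=x(c_1,c_2,c_3,v)$ automatically lies on $M$, so it suffices to control its tangent direction.

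First I would fix $s=c_1$, $t=c_2$, $u=c_3$ and differentiate in $v$, obtaining $\alpha'(v)=\partial_v x|_{(c_1,c_2,c_3,v)}$. Using the explicit parametrization \eqref{LOCALARAMETRIZ}, this is $\alpha'(v)=\phi_2(c_1)\,\Theta_2'(v)$, which depends on $v$ alone and therefore points along a single direction field along the curve. Next I would invoke Proposition~\ref{PROPSTUVCOORDS}, which gives $e_4=E_3^{-1}\partial_v$, equivalently $\partial_v=E_3 e_4$ for the smooth non-vanishing function $E_3$. Combining the two, $\alpha'(v)=E_3(\alpha(v))\,e_4$, so at every parameter value the tangent vector is a nonzero scalar multiple of $e_4$. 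This is exactly the assertion that $\alpha$ is tangent to the line distribution $D'=\mathrm{span}\{e_4\}$, i.e. that it traces an integral curve of $e_4$.

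I do not expect a genuine obstacle, since the content of the corollary is already packaged in the coordinate representation $e_4=E_3^{-1}\partial_v$, and the argument reduces to a one-line computation of $\alpha'(v)$. The only point deserving a word of care is the precise meaning of \emph{integral curve}: because $E_3$ need not equal $1$, the curve $\alpha$ is strictly an integral curve of the field $E_3 e_4$ rather than of $e_4$ itself. As $E_3$ is smooth and nowhere zero, reparametrizing $v$ through an antiderivative of $E_3^{-1}$ turns $\alpha$ into an honest integral curve of $e_4$; in particular the image of $\alpha$ is the maximal integral curve of $e_4$ through any of its points, which is all the statement requires.
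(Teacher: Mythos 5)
Your argument is correct and is essentially the one the paper intends: the corollary is stated without proof, being treated as immediate from the coordinate representation $e_{4}=E_{3}^{-1}\partial_{v}$ of Proposition~\ref{PROPSTUVCOORDS} applied to the parametrization \eqref{LOCALARAMETRIZ}. Your closing remark about reparametrizing by an antiderivative of $E_{3}^{-1}$ (since $\alpha$ is literally an integral curve of $E_{3}e_{4}$ rather than of $e_{4}$) is a point of care the paper silently elides, but it does not change the substance.
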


Next, we obtain integral submanifolds (surfaces) of the involutive distribution $D$ and integral curves of the $1$-dimensional involutive distribution $e_4$.

\begin{Prop}\label{PROPINSUBMOFD}
Let $M$ be a biconservative  hypersurface of index 2 in $\mathbb E^{5}_{2}$ with diagonalizable\linebreak shape operator and $\tilde{M}: x(s_{0}, t, u, v_{0}) = y(t, u)$ be the integral submanifold of the distribution $D$, passing through a point $p\in M$. If $M$ has three distinct principal curvatures, then $\tilde{M}$ is congruent to one of the following surfaces given by:
\renewcommand{\theenumi}{(\roman{enumi})}
\begin{enumerate}
\item\label{CASETOTGEO} A totally geodesic surface of $\mathbb E^5_2$, i.e., a non-degenerated 2-plane;

\item\label{LabCaseH20} A hyperbolic surface lying on a Lorentzian 3-plane, i.e., $\tilde{M}^{2}\simeq \mathbb H^{2}(-r^{2})\subset \mathbb E^{3}_{1}\subset \mathbb E^{4}_{1}\subset \mathbb E^{5}_{2}$, given by 
\begin{equation}\nonumber
y(t, u) = (0, r\mathrm{cosh} t, r\mathrm{sinh} t \cos u, r\mathrm{sinh} t \sin u, 0);
\end{equation}

\item\label{LabCaseS20}  A usual sphere lying on an Euclidean 3-plane, i.e., $\tilde{M}^{2}\simeq \mathbb S^{2}(r^{2})\subset \mathbb E^{3}\subset \mathbb E^{4}_{1}\subset \mathbb E^{5}_{2}$, given by
\begin{equation}\nonumber
y(t, u) = (0, 0, r\cos t, r\sin t \cos u, r\sin t \sin u);
\end{equation}

\item\label{LabCaseDegenM20}  A space-like surface lying on a degenerated hyperplane $\Pi$, i.e., $\tilde{M}^{2}\subset \Pi\subset \mathbb E^{4}_{1}\subset \mathbb E^{5}_{2}$, given by
\begin{equation}\label{LabCaseDegenM20SURF}
y(t, u) = ( A t^{2} + A u^{2},0, t, u, A t^{2} + A u^{2});
\end{equation}

\item\label{LabCaseH21}  A Lorentzian space-form lying on a 3-plane, i.e., $\tilde{M}^{2}_{1}\simeq \mathbb H^{2}_{1}(-r^{2})\subset \mathbb E^{3}_{2}\subset \mathbb E^{4}_{2}\subset \mathbb E^{5}_{2}$, given by 
\begin{equation}\nonumber
y(t, u) = (r\mathrm{cosh} t \sin u, r\mathrm{cosh} t \cos u, r\mathrm{sinh} t, 0, 0);
\end{equation}

\item\label{LabCaseDegenM21}  A Lorentzian surface lying on a degenerated hyperplane $\widetilde\Pi$, i.e., 
$\tilde{M}^{2}_{1}\subset \widetilde\Pi\subset \mathbb E^{4}_{2}\subset \mathbb E^{5}_{2}$, given    by 
\begin{equation}\label{LabCaseDegenM21SURF}
y(t, u) = (A t^{2} - A u^{2}, t, u,0, A t^{2} - A u^{2});
\end{equation}

\item\label{LabCaseS21}  A Lorentzian space-form lying on a Lorentzian 3-plane, i.e., $\tilde{M}^{2}_{1}\simeq \mathbb S^{2}_{1}(r^{2})\subset \mathbb E^{3}_{1}\subset \mathbb E^{4}_{2}\subset \mathbb E^{5}_{2}$, given by 
\begin{equation}\nonumber
y(t, u) = (0, r\mathrm{sinh} t, r\mathrm{cosh}t\cos u, r\mathrm{cosh} t \sin u, 0).
\end{equation}

\item\label{LabCaseS22}  A space-form lying on a 3-plane, i.e., $\tilde{M}^{2}_{2}\simeq\mathbb S^2_2(r^2)\subset \mathbb E^{3}_{2}\subset \mathbb E^{4}_{2}\subset \mathbb E^{5}_{2}$, given by
\begin{equation}\nonumber
y(t, u) = (r\mathrm{sinh} t cos u, r\mathrm{sinh} t \sin u, r\mathrm{cosh} u, 0, 0);
\end{equation}
\end{enumerate}
\end{Prop}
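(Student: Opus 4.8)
The plan is to build directly on Lemma \ref{LemmaShapeOPS}. On the integral submanifold $\tilde M$ the restricted normal frame $\{f_3,f_4,f_5\}$ consists of parallel or constant fields with $\tilde S_{f_3}=\epsilon_1\alpha_0 I$, $\tilde S_{f_5}=\beta_0 I$, and, since $f_4$ is constant, $\tilde S_{f_4}=0$. Reading the second fundamental form off component-by-component, I would first show that $\tilde M$ is totally umbilical in $\mathbb E^5_2$ with mean curvature vector $\eta=\alpha_0 f_3+\beta_0 f_5$, that $\eta$ is parallel in the normal bundle, and that its causal character $c:=\langle\eta,\eta\rangle=\varepsilon_1\alpha_0^2+\beta_0^2$ is constant. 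The Gauss equation \eqref{GaussEq} then forces $\tilde M$ to have constant sectional curvature equal to $c$.

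Second, I would cut down the codimension. Because $f_4$ is constant with $\tilde S_{f_4}=0$, the function $\langle x,f_4\rangle$ is constant on $\tilde M$, confining $\tilde M$ to an affine hyperplane orthogonal to $f_4$, i.e.\ a copy of $\mathbb E^4$. When $c\neq 0$ I would produce a second constant normal direction $\zeta\in\mathrm{span}\{f_3,f_5\}$ determined by $\zeta\perp\eta$: the Weingarten relations \eqref{MEtomWeingarten} give $\widetilde\nabla_{f_i}f_3=-\epsilon_1\alpha_0 f_i$ and $\widetilde\nabla_{f_i}f_5=-\beta_0 f_i$, whence the condition $\langle\zeta,\eta\rangle=0$ is exactly the condition $\widetilde\nabla_{f_i}\zeta=0$, $\tilde S_\zeta=0$, so $\tilde M$ lies in an affine $3$-space. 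There it is totally umbilical with one-dimensional, nonnull, parallel normal $\eta$, and the standard center argument — putting $x_0=x+\tfrac1c\eta$, checking $\widetilde\nabla_X x_0=0$ and $\langle x-x_0,x-x_0\rangle=\tfrac1c$ — identifies $\tilde M$ as an open part of a pseudo-sphere ($c>0$) or a pseudo-hyperbolic space ($c<0$) of that $3$-space.

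Third, I would pin down the precise model from the index data. The induced signature of $\tilde M$ is $(\varepsilon_2,\varepsilon_3)$, and since $k_2=k_3$ allows only $\varepsilon_2=-\varepsilon_3$ or $\varepsilon_2=\varepsilon_3$ while $M$ has total index $2$, a short bookkeeping decides whether $\tilde M$ is Riemannian, Lorentzian, or of index $2$; simultaneously the sign of $c$ is constrained by $\varepsilon_1$ (notably $\varepsilon_1=1\Rightarrow c\geq 0$). Matching curvature sign against surface index yields cases \ref{LabCaseH20}, \ref{LabCaseS20}, \ref{LabCaseH21}, \ref{LabCaseS21}, \ref{LabCaseS22} with their explicit parametrizations, and the implication $\varepsilon_1=1\Rightarrow c\geq 0$ is exactly what forbids an index-$2$ surface of negative curvature. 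The degenerate subcase $\eta=0$ gives the nondegenerate $2$-plane of \ref{CASETOTGEO}.

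The main obstacle is the null case $c=0$ with $\eta\neq 0$, where the center argument collapses: $\eta$ is a constant \emph{null} vector and $\eta^\perp\cap\mathrm{span}\{f_3,f_5\}=\mathrm{span}\{\eta\}$, so no second nondegenerate reduction is available. Here I would instead use that $\langle x,\eta\rangle$ is constant to place $\tilde M$ in a degenerate hyperplane $\Pi\subset\mathbb E^4$ and integrate the frame equations of Corollary \ref{CORRLODLASD} directly; flatness ($c=0$) together with umbilicity forces the position vector to depend quadratically on the parameters, producing the parabolic parametrizations \eqref{LabCaseDegenM20SURF} and \eqref{LabCaseDegenM21SURF} of cases \ref{LabCaseDegenM20} and \ref{LabCaseDegenM21}, the spacelike-versus-Lorentzian alternative again being fixed by $(\varepsilon_2,\varepsilon_3)$.
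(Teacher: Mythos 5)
Your proposal is correct and follows essentially the same route as the paper: both reduce the codimension of $\tilde M$ using the constant normal direction $f_4$ together with the constant vector in $\mathrm{span}\{f_3,f_5\}$ orthogonal to the mean curvature vector, then identify $\tilde M$ with a standard totally umbilical (isoparametric) surface of the resulting $3$-space, handling the null case by direct integration inside a degenerate hyperplane. The only difference is presentational: you organize the dichotomy by the causal character of $\eta$ and make the Gauss-equation and center arguments explicit, while the paper runs the equivalent case analysis on $(\delta_3,\delta_4)$ and the causal character of $\zeta=\beta_0 f_3-\alpha_0 f_5$ and appeals to the known classification of isoparametric surfaces.
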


\begin{proof}
Let $\tilde M$ be the integral submanifold of the distribution $D$ passing through a point $p\in M$. If $p\in\widetilde{\mathcal O}$, then by a direct computation, one can obtain that the second fundamental form of $\tilde M$ vanishes identically, i.e., it is a totally geodesic surface of $\mathbb E^5_2$, where $p\in\widetilde{\mathcal O}$ is the interior of $\{p\in M|k_2(p)=0\}$. Thus, we have the case \ref{CASETOTGEO} of the proposition.

Now, we assume $p\not\in\widetilde{\mathcal O}$ and consider the local orthonormal frame field $\{f_{1}, f_{2}; f_{3}, f_{4}, f_{5}\}$ described above. From Lemma \ref{LemmaShapeOPS}, we have
\begin{equation}\label{CovDerf35Eq1}
\widetilde\nabla_{f_i}f_3=-\alpha_0f_i,\quad\quad \widetilde\nabla_{f_i}f_5=-\beta_0f_i,
\end{equation}
where $\alpha_0$ and $\beta_0$ are constants defined above. Moreover, $\tilde M$ lies on a hyperplane $\Sigma^4_r$  with index  $r=2$ or $r=1$ depending upon $\delta_4=1$ or $\delta_4=-1$ whereas $f_4$ is a constant normal vector field of $\tilde M$. Before considering these two cases separately, we define another normal vector field 
$$\zeta=\beta_0f_3-\alpha_0f_5$$
which is constant because of equation \eqref{CovDerf35Eq1}.

\textbf{Case I}. $\delta_4=1$. In this case, the index of the induced metric of $\tilde M$ is either 1 or 2 subject to $\delta_3=-1$ or $\delta_3=1$, respectively. 

\textbf{Case Ia}. $\delta_4=1$, $\delta_3=1$. In this case, $\zeta$ is a space-like constant vector field normal to $\tilde M$. Therefore, $\tilde M$ lies on a 3-plane $\mathbb E^3_2\simeq\Pi^3_2\subset\Sigma^4_2$ of $\mathbb E^5_2$. Furthermore, the normal vector field of $\tilde M$ in $\Pi$ given by $\eta=\alpha_0f_3+\beta_0f_5$, satisfies $\widetilde\nabla_{f_i}\eta=-(\alpha_0^2+\beta_0^2)f_i$. Therefore, $M$ is congruent to an isoparametric surface in $\mathbb E^3_2$ with index 2. Hence, we have the case \ref{LabCaseS22} of the proposition.

\textbf{Case Ib} $\delta_4=1$, $\delta_3=-1$. In this case, the induced metric of $M$ is Lorentzian. However, we have two subcases regarding to causality of $\zeta$.

\textbf{Case Ib.(i)} $\zeta$ is not light-like. In this case, $\tilde M$ lies on a 3-plane $\Pi^3_r\subset\Sigma^4_2$ of index $r$ which is either 2 or 1 regarding to being space-like or time-like of $\zeta$, respectively. If $r=2$, then a similar argument to Case Ia yields that $M$ is congruent to $\mathbb H^2_1(-r^2)$ which gives case \ref{LabCaseH21} of the proposition. On the other hand, if $r=1$, then  $M$ is congruent to $\mathbb S^2_1(r^2)$. Thus, we have the case \ref{LabCaseS21} of the proposition.

\textbf{Case Ib.(ii)} $\zeta$ is light-like. In this case, $\tilde M$ lies on a degenerated plane $\widetilde \Pi$ of $\Sigma^4_2$. Up to congruency, we may assume
$$\widetilde \Pi=\{(t,x,y,t,0)|t,x,y\in\mathbb R\}.$$
Since $\zeta$ is light-like, we have $\beta_0=\pm\alpha_0$. By replacing $e_3$ with $-e_3$ if necessary, we may assume $\beta_0=\alpha_0$. Thus, equation \eqref{Shatf3f5} implies $\tilde{S}_{f_{3}}=\tilde{S}_{f_{5}}=\alpha_0I$ which yields that $\tilde M$ is a flat, pseudo-umbilical Lorentzian surface with parallel mean curvature vector. A direct computation yields that $\tilde M$ is congruent to the surface given in the case \ref{LabCaseDegenM21} of the proposition.

\textbf{Case II}. $\delta_4=-1$. In this case,  $\tilde M$ lies on a Lorentzian hyperplane $\Sigma^4_1$ of $\mathbb E^5_2$ and its induced metric is either Riemannian or Lorentzian subject to $\delta_3=-1$ or $\delta_3=1$, respectively. 

\textbf{Case IIa}. $\delta_4=-1$, $\delta_3=1$. In this case, by a similar way to Case Ia, we obtain that $\tilde M$ is a Lorentzian isoparametric surface lying on $\Pi^3_1\simeq\mathbb E^3_1$. Thus, we have $\tilde M=\mathbb S^2_1(r^2)$ which gives gain the case \ref{LabCaseS21}.

\textbf{Case IIb}. $\delta_4=-1$, $\delta_3=-1$. In this case, the induced metric of $\tilde M$ is Riemannian. Moreover, similar to  Case Ib, we have  two subcases regarding to causality of $\zeta$.

\textbf{Case IIb.(i)} $\zeta$ is not light-like. In this case, similar to Case Ib(i), we obtain the case \ref{LabCaseS20} or the case \ref{LabCaseH20}, if $\zeta$ is time-like or space-like, respectively.

\textbf{Case IIb.(ii)} $\zeta$ is light-like. In this case, by a similar way to Case Ib(ii), we see that $M$ is congruent to  the surface given in the case \ref{LabCaseDegenM20} of the proposition.
\end{proof}


\begin{Lemma}\label{INTCURVEOFE4}
Let $M$ be a biconservative  hypersurface of index 2 in $\mathbb E^{5}_{2}$ with the diagonalizable shape operator and $D': x(t_{0}, s_{0}, u_{0}, v) = \gamma (v)$ be the integral curve of $e_{4}$ passing through a point $p$ of $M$. Then, we have one and only one of the following cases for some constants $R>0,a\neq0$.
\renewcommand{\theenumi}{(\Alph{enumi})}
\begin{enumerate}
\item\label{CASESTRLINE} $k_4=0$ on a neighbourhood of $p$ in $M$ and $\gamma$ is an open part of a line;
\item\label{CASEcircV1} $\varepsilon_1=\pm1$, $\varepsilon_4=1$ and $\gamma$ is congruent to the circle $\frac 1R(0,0,\cos Rv,\sin Rv,0)$;
\item\label{CASEhyperV1} $\varepsilon_1=\pm1$, $\varepsilon_4=-1$ and $\gamma$ is congruent to the hyperbola $\frac 1R(\mathrm{sinh} Rv,0,0,0,\mathrm{cosh} Rv)$;
\item\label{CASEhyperV2} $\varepsilon_1=-1$, $\varepsilon_4=1$ and $\gamma$ is congruent to the hyperbola $\frac 1R(\mathrm{cosh} Rv,0,0,0,\mathrm{sinh} Rv)$;
\item\label{CASEDegeneratednv1} $\varepsilon_1=-1$, $\varepsilon_4=1$ and $\gamma$ is congruent to the curve $(av^2,0,v,0,av^2)$;
\item\label{CASEcircV2} $\varepsilon_1=-1$, $\varepsilon_4=-1$ and $\gamma$ is congruent to the circle $\frac 1R(\cos Rv,\sin Rv,0,0,0)$;
\item\label{CASEDegeneratednv2} $\varepsilon_1=-1$, $\varepsilon_4=-1$ and $\gamma$ is congruent to the curve $(av^2,v,0,0,av^2)$.
\end{enumerate}
\end{Lemma}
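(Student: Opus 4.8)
The plan is to regard $\gamma$ as a curve in the flat ambient space $\mathbb E^5_2$ and to distill from the geometry a single constant-coefficient linear ODE that it satisfies, after which the classification becomes the usual trichotomy for such curves. I parametrize $\gamma$ by arc length, so that $\gamma'=e_4$ (the factor $\langle e_4,e_4\rangle=\varepsilon_4=\pm1$ being harmless), and carry along the orthonormal data $\{e_4,e_1,N\}$. Using the Gauss and Weingarten formulas \eqref{MEtomGauss}, \eqref{MEtomWeingarten} together with \eqref{EQ317B} and \eqref{EQ317D} of Corollary \ref{CORRLODLASD}, and noting $h(e_4,e_4)=\varepsilon_4 k_4N$ while $h(e_1,e_4)=0$, I obtain the first-order system
\begin{align*}
\widetilde\nabla_{e_4}e_4&=-\varepsilon_1\,\omega_{14}(e_4)\,e_1+\varepsilon_4 k_4\,N,\\
\widetilde\nabla_{e_4}e_1&=\varepsilon_4\,\omega_{14}(e_4)\,e_4,\\
\widetilde\nabla_{e_4}N&=-k_4\,e_4,
\end{align*}
where $\widetilde\nabla$ along $\gamma$ is just ordinary differentiation in $\mathbb E^5_2$.

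The crucial simplification is that the two coefficients here are constant along $\gamma$: by Lemma \ref{LemmaPrCurvatures} we have $e_4(k_4)=0$, and the Codazzi equations \eqref{CodazziFormaLL} combined with Lemma \ref{LemmaPrCurvatures} give $e_4(\omega_{14}(e_4))=0$, so both are constant on the $v$-curve $\gamma$ (along which $s,t,u$ are fixed). Writing $c=\omega_{14}(e_4)$ and differentiating the first equation once more, the $e_1$- and $N$-terms collapse onto $e_4$, yielding
$$\gamma'''=-\lambda\,\gamma',\qquad \lambda=\varepsilon_4\big(\varepsilon_1 c^2+k_4^2\big).$$
Integrating this once shows $\gamma''+\lambda\gamma$ is a constant vector, so (when $\lambda\neq0$) $\gamma$ lies in the affine $2$-plane through a fixed centre spanned by $\gamma'$ and $\gamma''$; moreover a direct computation gives $\langle\gamma',\gamma''\rangle=0$, $\langle\gamma',\gamma'\rangle=\varepsilon_4$ and $\langle\gamma'',\gamma''\rangle=\varepsilon_1 c^2+k_4^2$, so that $\lambda=\varepsilon_4\langle\gamma'',\gamma''\rangle$ and the causal character of the spanning plane is governed precisely by the signs of $\varepsilon_4$ and of $\varepsilon_1c^2+k_4^2$.

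From here the argument is the standard trichotomy for $\gamma'''+\lambda\gamma'=0$. If $k_4=0$, then Codazzi \eqref{CodazziForm1} with $i=1,j=4$ forces $\omega_{14}(e_4)\,H=0$ and hence $c=0$, whence $\gamma''=0$ and $\gamma$ is an open part of a line, giving case \ref{CASESTRLINE}. If $k_4\neq0$ I split by the sign of $\lambda$: for $\lambda>0$ the solution is trigonometric and $\gamma$ is a circle; for $\lambda<0$ it is hyperbolic and $\gamma$ is a branch of a hyperbola; and for $\lambda=0$ (which by $k_4\neq0$ can occur only if $\varepsilon_1=-1$ and $c^2=k_4^2$, so that $\gamma''$ is light-like) the curve is quadratic in $v$ and degenerate. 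In each subcase the pair $(\varepsilon_1,\varepsilon_4)$ together with the sign of $\varepsilon_1c^2+k_4^2$ fixes both the admissible sign of $\lambda$ and the causal type of the plane; matching these against the listed normal forms, whose velocities and accelerations have exactly the norms $\varepsilon_4$ and $\varepsilon_1c^2+k_4^2$ computed above, identifies $\gamma$, up to an isometry of $\mathbb E^5_2$, with precisely one of the curves in \ref{CASEcircV1}--\ref{CASEDegeneratednv2}, with $R=\sqrt{|\lambda|}$ in the nondegenerate cases.

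The main obstacle I expect is twofold. First, the constancy of $\omega_{14}(e_4)$ along $\gamma$ is what makes the ODE have constant coefficients; without the interplay of the Codazzi equations and Lemma \ref{LemmaPrCurvatures} the classification would not close. Second, the light-like subcases $\lambda=0$ are genuinely different: there the spanning plane is degenerate, the circle/hyperbola dichotomy fails, and one must instead integrate $\gamma'''=0$ directly to get a quadratic curve, then select the correct degenerate normal form by the causal character of $\gamma'$ (namely $\varepsilon_4=+1$ versus $\varepsilon_4=-1$) to separate cases \ref{CASEDegeneratednv1} and \ref{CASEDegeneratednv2}. Pinning down the exact coordinate expressions in each case is then routine: it amounts to choosing the congruence of $\mathbb E^5_2$ that aligns $\gamma'$ and $\gamma''$ with coordinate axes of the appropriate causal type.
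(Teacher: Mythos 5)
Your proposal is correct and follows essentially the same route as the paper: both rest on the system \eqref{GammaEqs1ALL} together with the constancy of $k_4$ and $\omega_{14}(e_4)$ along $\gamma$, and both split into the same cases according to the causal character of $\gamma''$ (equivalently, the sign of your $\lambda=\varepsilon_4(\varepsilon_1 c^2+k_4^2)$). Packaging the argument as the single constant-coefficient equation $\gamma'''=-\lambda\gamma'$ rather than the paper's Frenet-frame planar-curve computation is only a stylistic variation, and your explicit Codazzi justification that $c=0$ when $k_4\equiv 0$ is a welcome detail the paper leaves implicit.
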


\begin{proof}
Let $\gamma (v)$ be an integral curve of $e_{4}$, i.e, $(e_4)_{\gamma}=\gamma'$. Then, $\left.k_{4}\right|_\gamma=\beta_0$ and $\left.\omega_{14}(e_4)\right|_\gamma=\alpha_0$ for some constants $\alpha_0,\beta_0$ because of Lemma \ref{LemmaPrCurvatures}. Moreover, using equations \eqref{EQ317B} and \eqref{EQ317D}, we get
\begin{subequations}\label{GammaEqs1ALL}
\begin{eqnarray}
\label{GammaEq1} \gamma''(v)=\left(\widetilde\nabla_{t}t\right)_{\gamma(v)}&=&-\varepsilon_1 \alpha_0\left(e_1\right)_{\gamma(v)}+ \varepsilon_4\beta_0 N_{\gamma(v)},\\
\label{GammaEq1b} \left(\widetilde\nabla_{t}e_1\right)_{\gamma(v)}&=&\varepsilon_4\alpha_0 t,\\
\label{GammaEq1c} \left(\widetilde\nabla_{t}N\right)_{\gamma(v)}&=&-\beta_0 t,
\end{eqnarray}
\end{subequations}
where we put $t=\gamma'=\left.e_4\right|_{\gamma}$ as the unit tangent vector field of $\gamma$. If $k_4=0$ on a neighbourhood of $p$, then we have $\gamma''=0$ which implies the case \ref{CASESTRLINE} of the lemma. Therefore, we consider the case that $\gamma''\neq0$. Now, we have three cases subject to causality of $\gamma''$. 

\textbf{Case I.} $\gamma''$ is space-like. In this case, equation \eqref{GammaEq1} gives
\begin{equation}\nonumber
\widetilde\nabla_{t}t=(\varepsilon_1 \alpha_0^2+ \beta_0^2)^{1/2} n,
\end{equation}
where $n$ is the unit normal vector field of $\gamma$. By a direct computation using equation \eqref{EQ317B} for $A=4$ and $\widetilde\nabla_{e_4}N=-k_4e_4$, we obtain
\begin{equation}\nonumber
\left(\widetilde\nabla_{t}n\right)=-\varepsilon_4 (\varepsilon_1 \alpha_0^2+ \beta_0^2)^{1/2}t.
\end{equation}
Therefore, $\gamma$ is a planar curve with constant curvature. Hence, it is either a hyperbola or a circle regarding whether $\varepsilon_4=-1$ or $\varepsilon_4=1$, respectively. Therefore we have either the case \ref{CASEcircV1} or the case \ref{CASEhyperV1} of the lemma for some $R>0$.

\textbf{Case II.} $\gamma''$ is time-like. In this case, we have $\varepsilon_1=-1$ and equation \eqref{GammaEq1} gives that
\begin{equation}\nonumber
\widetilde\nabla_{t}t=(\alpha_0^2-\beta_0^2)^{1/2} n.
\end{equation}
A similar arguement to Case I implies case  \ref{CASEcircV2} or the case \ref{CASEhyperV2} of the lemma for some $R>0$ subject to $\varepsilon_4=-1$ or $\varepsilon_4=1$, respectively.

\textbf{Case III.} $\gamma''$ is light-like. In this case  we have $\varepsilon_1=-1$ and we may assume $\alpha=\beta=a$ by replacing $N$ with $-N$ if necessary. Thus, equation \eqref{GammaEq1} gives that
\begin{equation}\label{GammaEq4}
\widetilde\nabla_{t}t=a\left(\left(e_1\right)_{\gamma(v)}+ \varepsilon_4 N_{\gamma(v)}\right).
\end{equation}
A further computation using equations \eqref{GammaEq1b} and \eqref{GammaEq1c}, we obtain $\widetilde\nabla_{t}\gamma''=0$. Thus, $\gamma''$ is a constant, light-like vector. Up to isometries of $\mathbb E^5_2$, we assume $\gamma''=a(1,0,0,0,1)$. By integrating this equation, we obtain the case \ref{CASEDegeneratednv1} and \ref{CASEDegeneratednv2} of the lemma.
\end{proof}

\subsection{Classification Theorems}
In this section, we obtain \textit{local} parametrization of biconservative hypersurfaces with 3 distinct principal curvatures. We would like to mention that, in the theorems obtained, it is assumed that the gradient of the mean curvature vector $H$ of $M$ is not light-like.
\begin{theorem}\label{MAINTHM1}
Let $M$ be an oriented biconservative hypersurface of index 2 in the pseudo-Euclidean space $\mathbb E^5_2$. Assume that its shape operator has the form 
$$S=\mathrm{diag}(k_1,0,0,k_4),\quad k_4\neq0.$$
Then, it is congruent to one of the following eight type of generalized cylinders over surfaces for some smooth functions $\phi=\phi(s)$ and $\psi=\psi(s)$.
\renewcommand{\theenumi}{(\roman{enumi})}
\begin{enumerate}
\item\label{CylinderVersion1Case1} $x(s, t, u, v) = (t, u,\phi\cos v,\phi\sin v,\psi),\quad \phi'^2+\psi'^2=1$;

\item\label{CylinderVersion1Case2} $x(s, t, u, v) = (\phi\mathrm{sinh} v,t, u,\phi\mathrm{cosh} v,\psi),\quad \phi'^2+\psi'^2=1$;

\item\label{CylinderVersion1Case3} $x(s, t, u, v) = (\psi,t, u,\phi\cos v,\phi\sin v),\quad \phi'^2-\psi'^2=-1$;

\item\label{CylinderVersion1Case4} $x(s, t, u, v) = (\phi\mathrm{cosh} v,t, u,\phi\mathrm{sinh} v,\psi),\quad \phi'^2-\psi'^2=1$;

\item\label{CylinderVersion1Case5} $\displaystyle x(s, t, u, v) = \left( \frac {v^2s}2+\psi+s,t,u,vs,\frac {v^2s}2+\psi\right), \ 1-2\psi'<0$;

\item\label{CylinderVersion1Case6} $x(s, t, u, v) = (\phi\cos v,\phi\sin v,t,u,\psi),\quad \phi'^2-\psi'^2=1$;

\item\label{CylinderVersion1Case7} $x(s, t, u, v) = (\phi\mathrm{sinh}v,\psi,t,u,\phi\mathrm{cosh}v),\quad \phi'^2-\psi'^2=-1$;

\item\label{CylinderVersion1Case8} $\displaystyle x(s, t, u, v) = \left(\frac {sv^2}2+\psi,sv,t,u,\frac {sv^2}2+\psi+s\right),\ 1+2\psi'<0$.
\end{enumerate}
\end{theorem}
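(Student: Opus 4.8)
The plan is to specialize the Section 3 machinery to the subcase forced here, $k_2=k_3=0$ (the only way $S=\mathrm{diag}(k_1,0,0,k_4)$ can exhibit three distinct principal curvatures), in which $k_1=-2H$ and, by \eqref{HangiHungiHsQQQQ1}, $k_4=6H$. First I would determine the integral surface $\tilde M$ of the distribution $D=\mathrm{span}\{e_2,e_3\}$: putting $k_2\equiv0$ in the Codazzi relation \eqref{CodazziForm1} with $i=1,\ j=2$ and using $k_1-k_2=k_1\neq0$ (distinctness of the curvatures) forces $\omega_{12}(e_2)=0$, and symmetrically $\omega_{13}(e_3)=0$. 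Hence the constants attached to $\tilde M$ in Lemma \ref{LemmaShapeOPS} are $\alpha_0=0$ and $\beta_0=k_2=0$, so by \eqref{Shatf3f5} both $\tilde S_{f_3}$ and $\tilde S_{f_5}$ vanish and $\tilde M$ is totally geodesic; that is, $\tilde M$ falls under case \ref{CASETOTGEO} of Proposition \ref{PROPINSUBMOFD}, an open piece of a non-degenerate $2$-plane of $\mathbb E^5_2$.

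Next I would classify the $e_4$-integral curve $\gamma$ by Lemma \ref{INTCURVEOFE4}; since $k_4\neq0$ the line case \ref{CASESTRLINE} is excluded, leaving the six curves \ref{CASEcircV1}--\ref{CASEDegeneratednv2}, each carrying prescribed signs $\varepsilon_1,\varepsilon_4$. To assemble $M$ I would feed this into the parametrization \eqref{LOCALARAMETRIZ}: because $\omega_{12}(e_2)=\omega_{13}(e_3)=0$ we get $x_{st}=x_{su}=0$, so the plane directions $x_t,x_u$ are $s$-independent constant vectors and $\Theta_1(t,u)=tP_1+uP_2$ is affine, while the entire $v$-dependence is the curve $\Theta_2=\gamma$ and the $s$-evolution is carried by a radius function $\phi=\phi_2(s)$ together with an axial translation $\psi$ inside $\Gamma(s)$. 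Geometrically this exhibits $M$ as a cylinder $\mathbb R^2_{(t,u)}\times\Sigma^2_{(s,v)}$ whose profile surface $\Sigma^2$ lies in the $3$-space orthogonal to the plane and is swept out by rotating or boosting $\gamma$ as $\phi,\psi$ vary --- a generalized cylinder over a surface, as claimed.

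Then I would match signatures to enumerate the cases. As $M$ has index $2$, exactly two of $\varepsilon_1,\varepsilon_2,\varepsilon_3,\varepsilon_4$ are negative, so once the curve type fixes $(\varepsilon_1,\varepsilon_4)$ the plane signature $(\varepsilon_2,\varepsilon_3)$ is determined: negative-definite when $\varepsilon_1=\varepsilon_4=1$, positive-definite when $\varepsilon_1=\varepsilon_4=-1$, and Lorentzian otherwise. Running through \ref{CASEcircV1}--\ref{CASEDegeneratednv2} together with the two admissible values of $\varepsilon_1$ in the circle case \ref{CASEcircV1} and the hyperbola case \ref{CASEhyperV1} produces exactly eight admissible combinations, which after placing plane and curve into orthogonal coordinate subspaces are precisely \ref{CylinderVersion1Case1}--\ref{CylinderVersion1Case8}. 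The side conditions come from $e_1=\partial_s$: in the four non-degenerate (circle/hyperbola) cases a direct computation of $\langle x_s,x_s\rangle=\varepsilon_1$ gives $\phi'^2+\psi'^2=1$ or $\phi'^2-\psi'^2=\pm1$, whereas in the two degenerate cases \ref{CASEDegeneratednv1} and \ref{CASEDegeneratednv2} (light-like $\gamma''$) there is no unit-speed normalization and the requirement only constrains the sign of $\langle x_s,x_s\rangle$, yielding the stated open inequalities.

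I expect the main obstacle to be the bookkeeping of this last step: correctly pairing each of the six curves with the unique plane signature permitted by the index-$2$ constraint while tracking the two sign choices in the circle and hyperbola cases so that no combination is missed or double counted, and then handling the two degenerate cases, where the absence of a unit-speed normalization means one must argue directly that $\partial_s$ is time-like and that the induced metric is non-degenerate of index $2$, rather than simply reading off a Pythagorean identity.
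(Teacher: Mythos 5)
Your proposal is correct and follows essentially the same route as the paper: identify the leaf of $D$ as a totally geodesic $2$-plane via Proposition \ref{PROPINSUBMOFD}, classify the $e_4$-integral curve via Lemma \ref{INTCURVEOFE4}, assemble $M$ through the parametrization \eqref{LOCALARAMETRIZ}, and read off the constraints $\phi'^2\pm\psi'^2=\pm1$ (or the sign inequalities in the degenerate cases) from $\langle x_s,x_s\rangle=\varepsilon_1$. The only cosmetic difference is that you enumerate by curve type and deduce the plane's signature from the index-$2$ constraint, whereas the paper enumerates by the position of the plane first; the resulting eight cases coincide.
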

\begin{proof}
Let $k_2$ vanishes identically on $M$ and $\tilde M$ be the integral submanifold of the distribution $D$ passing through a point $p=x(0,0,0,0)\in M$. Now, consider the local parametrization $x(s,t,u,v)$ given in equation \eqref{LOCALARAMETRIZ} for some smooth functions $\phi_{1},\phi_{2}$ and smooth mappings $\Theta_{1}, \Theta_{2}, \Gamma=(\Gamma_1,\Gamma_2,\Gamma_3,\Gamma_4,\Gamma_5)$.

 Then, from Proposition \ref{PROPINSUBMOFD}, we see that  $\tilde M$ is a 2-plane. Thus, up to isometries of $\mathbb E^5_2$,
we may assume that $y(t,u)=x(0,t,u,0)$ is congruent to $(t,u,0,0,0)$ or $(0,t,u,0,0)$ or $(0,0,0,t,u)$. Furthermore, by redefining $t,u,\phi_2,\Gamma$ appropriately, we may assume that $\Theta_{1}(t, u)=y(t,u)$ and $\phi_1=1$. We also put $\phi_2=\phi$.

\textbf{Case I}. Let us consider $y(t,u)=(t,u,0,0,0)$. In this case, we have $\varepsilon_1=\varepsilon_4=1$. Thus, by the Lemma \ref{INTCURVEOFE4}, the integral curve of $e_4$ is a circle on a Riemannian plane and equation \eqref{LOCALARAMETRIZ} becomes
\begin{equation}\label{LOCALARAMETRIZCylinderVersion1Case1Eq1}
x(s, t, u, v) = (t, u,0,0,0) + \phi(s)\Theta_{2}(v) + \Gamma(s).
\end{equation} 
Now from assumption, $e_1$ and $e_4$ are space-like vectors. Thus, by redefining $\phi$ and $\Gamma$, we may assume that $\Theta_2$ is position vector of a circle of radius 1 with center at origin. Furthermore, since $\langle x_t,x_v\rangle=\langle x_u,x_v\rangle=0$, by redefining $\Gamma$ appropriately if necessary and applying an isometry of $\mathbb E^5_2$, we may assume that
$\Theta_{2}(v)=(0,0,\cos v,\sin v,0).$
Therefore, equation \eqref{LOCALARAMETRIZCylinderVersion1Case1Eq1} becomes
$x(s, t, u, v) = (t, u,\phi(s)\cos v,\phi(s)\sin v,0)+ \Gamma(s).$
Considering $\{x_s,x_t,x_u,x_v\}$ as an orthonormal base field, we obtain the case \ref{CylinderVersion1Case1} of the theorem.

\textbf{Case II}. In this case, $y(t,u)=(0,t,u,0,0)$. Hence equation  \eqref{LOCALARAMETRIZ} becomes
\begin{equation}\label{LOCALARAMETRIZCylinderVersion1Case2Eq1}
x(s, t, u, v) = (0,t, u,0,0) + \phi(s)\Theta_{2}(v) + \Gamma(s)
\end{equation} 
and we have $-\varepsilon_2=\varepsilon_3=1$. Therefore, we have two subcases:

\textbf{Case IIA}. Firstly, we have $\varepsilon_1=1$ and $ \varepsilon_4=-1$. In this case, Lemma \ref{INTCURVEOFE4} implies that the integral curve of $e_4$ is congruent to the hyperbola $\frac 1R(\mathrm{sinh} Rv,0,0,\mathrm{cosh} Rv)$. By the same way to the case I, we have the case   \ref{CylinderVersion1Case2} of the theorem.

\textbf{Case IIB}. Secondly, we have $\varepsilon_1=-1$ and $\varepsilon_4=1$. In this case, the integral curve of $e_4$ is  congruent to  the circle $\frac 1R(0,0,\cos Rv,\sin Rv,0)$ or the hyperbola $\frac 1R(\mathrm{cosh} Rv,0,0,0,\mathrm{sinh} Rv)$ or the curve $(av^2,0,v,0,av^2)$. If it is congruent to the circle or hyperbola, we have either case \ref{CylinderVersion1Case3} or \ref{CylinderVersion1Case4} of the theorem, respectively.

Now assume that the integral curve of $e_4$ is congruent to the curve $(av^2,0,v,0,av^2)$. Since $\langle x_t,x_v\rangle=\langle x_u,x_v\rangle=0$, we have $\Theta_2(v)=(\theta_1(v),c_2,c_3,\theta_4(v),\theta_5(v))$ for some constants $c_2$ and $c_3$. By redefining $\Gamma$ appropriately, we may assume that $c_2=c_3=0$. Therefore, up to isometries of $\mathbb E^5_2$, we may assume that $\Theta_{2}(v)=(av^2,0,0,v,av^2).$ Thus, equation \eqref{LOCALARAMETRIZCylinderVersion1Case2Eq1} becomes
\begin{equation}\label{LOCALARAMETRIZCylinderVersion1Case2Eq2}
x(s, t, u, v) = (av^2\phi+ \Gamma_1,t+ \Gamma_2, u+ \Gamma_3,v\phi+ \Gamma_4,av^2\phi+ \Gamma_5). 
\end{equation} 
Considering $\langle x_s,x_u\rangle=\langle x_s,x_t\rangle=\langle x_s,x_v\rangle=0$, we get $ \Gamma_2'= \Gamma_3'=\Gamma_4'=0$ and $\phi=2a(\Gamma_1-\Gamma_5)+c_1$ for a constant $c_1$. Now, up to a translation, we may assume that $ \Gamma_2= \Gamma_3=\Gamma_4=0$.  Thus,  from equation \eqref{LOCALARAMETRIZCylinderVersion1Case2Eq2}, we get
\begin{align}\label{LOCALARAMETRIZCylinderVersion1Case2Eq3}
\begin{split}
x(s, t, u, v) = \Big(&a v^2 \left(2 a \left(\Gamma _1-\Gamma _5\right)+c_1\right)+\Gamma _1,t,u,v \left(2 a \left(\Gamma _1-\Gamma _5\right)+c_1\right),\\&
a v^2 \left(2 a \left(\Gamma _1-\Gamma _5\right)+c_1\right)+\Gamma_5\Big)
\end{split} 
\end{align} 
Further, defining new coordinates $\tilde s,\tilde v$ by $\tilde s=\Gamma_1-\Gamma_5$, $\tilde v=2a v$, we obtain that $M$ is congruent to the surface given in the case \ref{CylinderVersion1Case5} of the theorem.

\textbf{Case III}. Let us assume that $y(t,u)=(0,0,t,u,0)$. In this case, we have $\varepsilon_2=\varepsilon_3=1$. Therefore, we have $\varepsilon_1=\varepsilon_4=-1$. Furthermore, because of Lemma \ref{INTCURVEOFE4}, the integral curve of $e_4$ is congruent to  the hyperbola $\frac 1R(\mathrm{sinh} Rv,0,0,0,\mathrm{cosh} Rv)$ or the circle $\frac 1R(\cos Rv,\sin Rv,0,0,0)$ or to the curve $(av^2,v,0,0,av^2)$. By a similar way to the Case IIB, we obtain case \ref{CylinderVersion1Case6}-\ref{CylinderVersion1Case8} of the theorem.
\end{proof}

By an exactly same way with the proof of Theorem \ref{MAINTHM1}, we obtain the following theorem.
\begin{theorem}\label{MAINTHM2}
Let $M$ be an oriented hypersurface of index 2 in the pseudo-Euclidean space $\mathbb E^5_2$. Assume that its shape operator has the form 
$$S=\mathrm{diag}(k_1,k_2,k_2,0),\quad k_2\neq0.$$
Then, it is congruent to one of the following eight type of cylinders for some smooth functions $\phi=\phi(s)$ and $\psi=\psi(s)$.
\renewcommand{\theenumi}{(\roman{enumi})}
\begin{enumerate}
\item\label{NonCylinderCase1} $x(s,t,u,v)=(v, \phi\mathrm{cosh} t, \phi\mathrm{sinh} t \cos u, \phi\mathrm{sinh} t \sin u, \psi),\quad \phi'^2-\psi'^2=1$;

\item\label{NonCylinderCase2} $x(s,t,u,v)=(v,\psi, \phi\cos t, \phi\sin t \cos u,\phi\sin t \sin u),\quad \phi'^2-\psi'^2=-1$;

\item\label{NonCylinderCase3} $x(s,t,u,v)=(\phi\mathrm{cosh} t \sin u, \phi\mathrm{cosh} t \cos u, \phi\mathrm{sinh} t,\psi,v),\quad \phi'^2-\psi'^2=1$;

\item\label{NonCylinderCase4} $x(s,t,u,v)=(\psi, \phi\mathrm{sinh} t, \phi\mathrm{cosh}t\cos u, \phi\mathrm{cosh} t \sin u, v),\quad \phi'^2-\psi'^2=-1$;

\item\label{NonCylinderCase5} $x(s,t,u,v)=(v, \phi\mathrm{sinh} t, \phi\mathrm{cosh}t\cos u, \phi\mathrm{cosh} t \sin u, \psi),\quad \phi'^2+\psi'^2=1$;

\item\label{NonCylinderCase6} $x(s,t,u,v)=(\phi\mathrm{sinh} t cos u, \phi\mathrm{sinh} t \sin u, \phi\mathrm{cosh} u, \psi, v),\quad \phi'^2+\psi'^2=1$;

\item\label{NonCylinderCase7}$\displaystyle x(s, t, u, v) = \left(\frac{s (t^2+u^2)}{2}+\psi ,v,st,su,\frac{s (t^2+u^2)}{2}+\psi-s\right),\ \ 1-2\psi'<0$;

\item\label{NonCylinderCase8}$\displaystyle x(s, t, u, v) = \left(\frac{s (t^2-u^2)}{2}+\psi ,st,su,v,\frac{s (t^2-u^2)}{2}+\psi+s\right),\ \ 1+2\psi'<0$.

\end{enumerate}
\end{theorem}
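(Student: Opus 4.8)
The plan is to run the argument of Theorem \ref{MAINTHM1} with the roles of the two distributions interchanged. In Theorem \ref{MAINTHM1} the hypothesis $k_2=k_3=0$ forced the integral surface $\tilde M$ of $D=\mathrm{span}\{e_2,e_3\}$ to be a totally geodesic $2$-plane (case \ref{CASETOTGEO} of Proposition \ref{PROPINSUBMOFD}), and all the bending was carried by the integral curve of $e_4$. Here the situation is reversed: since $k_4=0$, Lemma \ref{INTCURVEOFE4} (case \ref{CASESTRLINE}) shows that the integral curve $\gamma(v)=x(c_1,c_2,c_3,v)$ of $e_4$ is an open part of a straight line, so $M$ is a cylinder in the $e_4$-direction, while the nontrivial geometry is now concentrated in $\tilde M$. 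Because $k_2=k_3\neq0$, Proposition \ref{PROPINSUBMOFD} identifies $\tilde M$ with one of the non-totally-geodesic surfaces \ref{LabCaseH20}--\ref{LabCaseS22}.

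Concretely, I would start from the local parametrization \eqref{LOCALARAMETRIZ},
$$x(s,t,u,v)=\phi_1(s)\Theta_1(t,u)+\phi_2(s)\Theta_2(v)+\Gamma(s),$$
in which $\Theta_1(t,u)$ encodes the $D$-surface and $\Theta_2(v)$ the $e_4$-curve. Fixing $p=x(0,0,0,0)$ and using that $\gamma$ is a straight line (so the connection form $\omega_{14}(e_4)$ vanishes and $\phi_2$ may be taken constant), I would redefine $t,u,v,\phi_2,\Gamma$ so that $\Theta_2(v)$ is an affine segment along a coordinate axis and $\Theta_1$ coincides with the model surface $y(t,u)$ furnished by Proposition \ref{PROPINSUBMOFD}; an isometry of $\mathbb E^5_2$ then places that model in its standard coordinate slots, exactly as in the three Cases of Theorem \ref{MAINTHM1}.

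Next I would distinguish cases according to the signature data $(\varepsilon_1,\varepsilon_4)$ and the causal type of the constant vector $\zeta=\beta_0f_3-\alpha_0f_5$ from the proof of Proposition \ref{PROPINSUBMOFD}. The six non-degenerate surfaces produce the forms \ref{NonCylinderCase1}--\ref{NonCylinderCase6}, with the pseudo-sphere $\mathbb S^2_1$ accounting for two of them (cases \ref{NonCylinderCase4} and \ref{NonCylinderCase5}) according as the straight-line axis $e_4$ is space-like or time-like; the two degenerate surfaces \ref{LabCaseDegenM20} and \ref{LabCaseDegenM21} yield the quadratic parametrizations \ref{NonCylinderCase7} and \ref{NonCylinderCase8}. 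In every case the orthogonality relations $\langle x_s,x_t\rangle=\langle x_s,x_u\rangle=\langle x_s,x_v\rangle=\langle x_t,x_v\rangle=\langle x_u,x_v\rangle=0$, which hold because $\{e_1,e_2,e_3,e_4\}$ are principal, reduce the remaining freedom to the two functions $\phi,\psi$, and the normalization $\langle x_s,x_s\rangle=\varepsilon_1$ gives the stated first-order constraints $\phi'^2\pm\psi'^2=\pm1$, respectively $1\mp2\psi'<0$ in the two degenerate cases.

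I expect the main obstacle to be the signature bookkeeping rather than any single integration, which is a routine separable ODE: in $\mathbb E^5_2$ the triple $(\varepsilon_1,\varepsilon_3,\varepsilon_4)$ admits several consistent configurations, and one must verify that each is compatible with a genuine index-$2$ immersion and leads to exactly one of the eight normal forms, neither omitting an admissible configuration nor listing two congruent surfaces separately. Organizing the case distinction so that it runs in parallel with the Cases of Theorem \ref{MAINTHM1} is what makes the argument go through.
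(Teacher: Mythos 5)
Your proposal is correct and takes essentially the same approach the paper intends: the paper gives no separate argument for Theorem \ref{MAINTHM2}, stating only that it follows ``by an exactly same way'' as Theorem \ref{MAINTHM1}, and your dualized version --- the integral curve of $e_4$ degenerating to a straight line via case \ref{CASESTRLINE} of Lemma \ref{INTCURVEOFE4} while the integral surface of $D$ runs through the nontrivial cases of Proposition \ref{PROPINSUBMOFD} --- is precisely that argument, including the correct doubling of the $\mathbb S^2_1$ case according to the causal character of the line direction.
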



In the next theorem, we obtain local parametrizations of biconservative hypersurfaces with 3 distinct non-zero  principal curvatures.
\begin{theorem}\label{MAINTHM3}
Let $M$ be an oriented hypersurface of index 2 in the pseudo-Euclidean space $\mathbb E^5_2$. Assume that its shape operator has the form 
$$S=\mathrm{diag}(k_1,k_2,k_2,k_4),\quad k_4\neq k_2$$
for some non-vanishing smooth functions $k_1,k_2,k_4$.
Then, it is congruent to one of the following eight type of hypersurfaces for some smooth functions $\phi_{1}=\phi_{1}(s)$ and $\phi_{2}=\phi_{2}(s)$.
\renewcommand{\theenumi}{(\roman{enumi})}
\begin{enumerate}
\item\label{HYPERSURFVersion2Case1} $x(s,t,u,v)=\left(\phi_{2}\mathrm{sinh} v, \phi_{1}\mathrm{cosh} t, \phi_{1}\mathrm{sinh} t \cos u, \phi_{1}\mathrm{sinh} t \sin u, \phi_{2}\mathrm{cosh} v\right),\quad \phi_1'^2-\phi_2'^2=1;$

\item\label{HYPERSURFVersion2Case2}  $x(s,t,u,v)=\left(\phi_{2}\cos v, \phi_{2}\sin v, \phi_{1}\cos t, \phi_{1}\sin t \cos u, \phi_{1}\sin t \sin u\right),\quad \phi_1'^2-\phi_2'^2=-1;$
\item\label{HYPERSURFVersion2Case3} $x(s,t,u,v)=\left(\phi_{1}\mathrm{cosh} t \sin u, \phi_{1}\mathrm{cosh} t \cos u, \phi_{1}\mathrm{sinh} t, \phi_2\cos v, \phi_2\sin v\right),\quad \phi_1'^2-\phi_2'^2=1;$

\item\label{HYPERSURFVersion2Case5} $x(s,t,u,v)=\left(\phi_2\mathrm{sinh} v, \phi_1\mathrm{sinh} t, \phi_1\mathrm{cosh}t\cos u, \phi_1\mathrm{cosh} t \sin u, \phi_2\mathrm{cosh} v\right),\quad \phi_1'^2+\phi_2'^2=1;$

\item\label{HYPERSURFVersion2Case4} $x(s,t,u,v)=\left(\phi_2\mathrm{cosh} v, \phi_1\mathrm{sinh} t, \phi_1\mathrm{cosh}t\cos u, \phi_1\mathrm{cosh} t \sin u, \phi_2\mathrm{sinh} v\right),\quad \phi_1'^2-\phi_2'^2=-1;$

\item\label{HYPERSURFVersion2Case6} $x(s,t,u,v)=\left(\phi_1\mathrm{sinh} t cos u, \phi_1\mathrm{sinh} t \sin u, \phi_1\mathrm{cosh} u, \phi_2\cos v,\phi_2\sin v\right),\quad \phi_1'^2+\phi_2'^2=1;$

\item\label{HYPERSURFVersion2CaseDegS1} A hypersurface given by
\begin{align}\label{HYPERSURFVersion2CaseDegS1Eq1}
\begin{split}
x( s, t, u, v)=&\left(\frac{ s }{2}\left( t^2+ u^2- v^2\right)-a v^2 + \psi, v (2a+ s), s  t, s  u,\right.\\
&\ \ \left.\frac{ s}{2} \left( t^2+ u^2- v^2 \right)-a v^2 +\psi -s
\right)
\end{split}
\end{align}
for a non-zero constants $a$ and a smooth function $\psi=\psi ( s)$ such that $1-2 \psi '<0$;

\item\label{HYPERSURFVersion2CaseDegS2} A hypersurface given by
\begin{align}\label{HYPERSURFVersion2CaseDegS2Eq1}
\begin{split}
x( s, t, u, v)=& \left(\frac{ s\left( t^2- u^2- v^2\right)}{2} +a  v^2+\psi, s  t, s  u, v ( s-2a),\right.\\
&\ \  \left.\frac{ s\left( t^2- u^2- v^2\right)}{2} +a  v^2+\psi+s\right) 
\end{split}
\end{align}
for a non-zero constants $a$ and a smooth function $\psi=\psi (\tilde s)$ such that $1+2 \psi '<0$.
\end{enumerate}
\end{theorem}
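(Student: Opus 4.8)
The plan is to run the same scheme as in the proof of Theorem \ref{MAINTHM1}, but now exploiting that every principal curvature is non-zero. I would start from the local parametrization
$x(s,t,u,v)=\phi_{1}(s)\Theta_{1}(t,u)+\phi_{2}(s)\Theta_{2}(v)+\Gamma(s)$
produced by the Proposition preceding Corollary \ref{COROLINTSUB}, in which $\Theta_{1}=y(t,u)$ describes the integral surface of $D$ and $\Theta_{2}$ records the integral curve of $e_{4}$. Since $k_{2}=k_{3}\neq0$, Proposition \ref{PROPINSUBMOFD} excludes the totally geodesic case \ref{CASETOTGEO}, so $\Theta_{1}$ must be one of the non-flat surfaces \ref{LabCaseH20}--\ref{LabCaseS22}; and since $k_{4}\neq0$, Lemma \ref{INTCURVEOFE4} excludes the straight line \ref{CASESTRLINE}, so $\Theta_{2}$ is a circle, a hyperbola, or a degenerate light-like curve. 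In contrast to Theorem \ref{MAINTHM1}, here $\phi_{1}$ is genuinely non-constant because $\tilde M$ is curved, so both profile functions $\phi_{1},\phi_{2}$ survive.

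The central step is to decide which surface may be paired with which curve. In the non-degenerate cases the normalized $\Theta_{1}$ occupies a $3$-dimensional coordinate block $V_{3}$ (the $3$-plane of Proposition \ref{PROPINSUBMOFD}), and orthogonality of the frame forces $\Theta_{2}$ into the complementary $2$-plane $V_{2}$, giving an orthogonal splitting $\mathbb E^{5}_{2}=V_{3}\oplus V_{2}$. Since $M$ has index $2$, one must have $\mathrm{ind}(V_{3})+\mathrm{ind}(V_{2})=2$; this index balance, together with the causal character of the normal to $\tilde M$ inside $V_{3}$ and of the tangent of $\gamma$ inside $V_{2}$, singles out exactly the admissible combinations. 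For instance, the hyperbolic surface \ref{LabCaseH20} sits in a Lorentzian $V_{3}$ ($\mathrm{ind}=1$), which pairs with the time-like hyperbola \ref{CASEhyperV1} in a Lorentzian $V_{2}$ ($\mathrm{ind}=1$) to yield case \ref{HYPERSURFVersion2Case1}; running through the five curved surfaces and the circle/hyperbola alternatives produces precisely the six non-degenerate cases \ref{HYPERSURFVersion2Case1}--\ref{HYPERSURFVersion2Case6}.

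For each admissible pair I would normalize $\Theta_{1},\Theta_{2}$ to the standard forms of Proposition \ref{PROPINSUBMOFD} and Lemma \ref{INTCURVEOFE4} by an ambient isometry. Writing $x_{t}=\phi_{1}(\Theta_{1})_{t}$, $x_{v}=\phi_{2}(\Theta_{2})_{v}$ and $x_{s}=\phi_{1}'\Theta_{1}+\phi_{2}'\Theta_{2}+\Gamma'$, the conditions $\langle x_{s},x_{t}\rangle=\langle x_{s},x_{u}\rangle=\langle x_{s},x_{v}\rangle=0$ force $\Gamma'$ to be orthogonal to all tangents of $\tilde M$ and of $\gamma$; since these together span $V_{3}\oplus V_{2}$, this gives $\Gamma'=0$, so $\Gamma$ is constant and removable by a translation. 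Then, using $\langle\Theta_{1},\Theta_{2}\rangle=0$ and the constant norms $\langle\Theta_{1},\Theta_{1}\rangle,\langle\Theta_{2},\Theta_{2}\rangle\in\{-1,1\}$, the identity $\langle x_{s},x_{s}\rangle=\varepsilon_{1}$ reduces to $\phi_1'^2\langle\Theta_1,\Theta_1\rangle+\phi_2'^2\langle\Theta_2,\Theta_2\rangle=\varepsilon_{1}$, which is exactly the stated relation $\phi_1'^2\pm\phi_2'^2=\pm1$ in each case. Thus cases \ref{HYPERSURFVersion2Case1}--\ref{HYPERSURFVersion2Case6} follow essentially by bookkeeping.

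The main obstacle is the two degenerate cases \ref{HYPERSURFVersion2CaseDegS1} and \ref{HYPERSURFVersion2CaseDegS2}, where $\Theta_{1}$ is a degenerate surface (\ref{LabCaseDegenM20} or \ref{LabCaseDegenM21}) and $\Theta_{2}$ a light-like curve (\ref{CASEDegeneratednv2} or \ref{CASEDegeneratednv1}) at the same time. Here the clean orthogonal splitting fails, $\Gamma$ can no longer be killed, and the orthogonality relations turn into a coupled first-order system for the components $\Gamma_{1},\dots,\Gamma_{5}$ and $\phi_{2}$, of exactly the type already handled in Case IIB of Theorem \ref{MAINTHM1}. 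I would integrate this system, introducing the auxiliary coordinate $\tilde s=\Gamma_{1}-\Gamma_{5}$ and rescaling $v$, and read off the causal constraint on $e_{1}$ as the inequality $1-2\psi'<0$ or $1+2\psi'<0$ coming from $\langle x_{s},x_{s}\rangle=\varepsilon_{1}$, thereby arriving at the explicit forms \eqref{HYPERSURFVersion2CaseDegS1Eq1} and \eqref{HYPERSURFVersion2CaseDegS2Eq1}. I expect the algebra of these two cases, rather than any new conceptual ingredient, to be the technical heart of the proof.
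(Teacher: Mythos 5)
Your proposal is correct and follows essentially the same route as the paper's proof: decompose via the parametrization $x=\phi_1\Theta_1+\phi_2\Theta_2+\Gamma$, identify $\Theta_1$ with one of the non-flat surfaces of Proposition \ref{PROPINSUBMOFD} and $\Theta_2$ with the compatible curve of Lemma \ref{INTCURVEOFE4} (your index-balance bookkeeping reproduces exactly the pairings of the paper's Table \ref{table:kysymys}, including the split of $\mathbb S^2_1$ into the two hyperbola subcases), then read off $\phi_1'^2\pm\phi_2'^2=\pm1$ from $\langle x_s,x_s\rangle=\varepsilon_1$ and integrate the coupled system in the two degenerate cases via $\tilde s=\Gamma_1-\Gamma_5$. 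This matches the paper's argument in both structure and detail.
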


\begin{proof}
Let $\tilde M$ be the integral submanifold of the distribution $D$ passing through a point $p=x(0,0,0,0)\in M$, where $x=x(s,t,u,v)$ is the local parametrization of $M$ near $p$ given by the equation \eqref{LOCALARAMETRIZ} for some $\mathbb E^{5}_{2}$-valued functions $\Theta_{1}$, $\Theta_{2}$, $\Gamma$ and some smooth real valued functions $\phi_{1}$, $\phi_{2}$. From Proposition \ref{PROPSTUVCOORDS}, we have
\begin{subequations}\label{MainTHMMETRICSEq1All}
\begin{eqnarray}
\label{MainTHMMETRICSEq1a}\langle x_s,x_s\rangle=\varepsilon_1,\quad \langle x_s,x_t\rangle=\langle x_s,x_u\rangle=\langle x_s,x_v\rangle=0,\\
\label{MainTHMMETRICSEq1b}\langle x_t,x_v\rangle=\langle x_u,x_v\rangle=0.
\end{eqnarray}
\end{subequations}

Because of Corollary \ref{COROLINTSUB} and Corollary \ref{COROLINTCURVE}, $y(t,u)=x(0,t,u,0)$ and $\gamma(v)=x(0,0,0,v)$ are integral submanifolds of $M$. By redefining $\phi_{1}$, $\phi_{2}$, $\Gamma$ properly and using an appropriated isometry of $\mathbb E^5_2$, we may assume that $\Theta_{1}=c_1 y$ and $\Theta_{2}=c_2 \gamma$ for any non-zero constant $c_1,c_2$. On the other hand, $y$ is the position vector of one of the surfaces given in the case \ref{LabCaseH20}- \ref{LabCaseS22} of the Proposition  \ref{PROPINSUBMOFD}. We consider these cases separately. 

If $\tilde M$ is congruent to the surface given in the case \ref{LabCaseH20} of the Proposition  \ref{PROPINSUBMOFD}, we may assume
$$\Theta_{1}(t, u) = (0, \mathrm{cosh} t, \mathrm{sinh} t \cos u, \mathrm{sinh} t \sin u, 0).$$
Therefore, we have $\varepsilon_2=\varepsilon_3=1$ which gives $\varepsilon_1=\varepsilon_4=-1$. Moreover, by considering equation \eqref{MainTHMMETRICSEq1b}, we assume that $\gamma$ lies on the Lorentzian plane $\{(a,0,0,0,b)|a,b\in\mathbb R\}$. Now from the Lemma \ref{INTCURVEOFE4}, the integral curve of $e_4$ is congruent to hyperbola $\frac 1R(\mathrm{sinh} Rv,0,0,0,\mathrm{cosh} Rv)$. By a further computation using equation \eqref{MainTHMMETRICSEq1a}, we obtain that $M$ is congruent to the hypersurface given in the case \ref{HYPERSURFVersion2Case1} of the theorem. 

By a similar way, we see that the case \ref{LabCaseS20}, \ref{LabCaseH21}, \ref{LabCaseS21} and \ref{LabCaseS22} of the Proposition  \ref{PROPINSUBMOFD} gives the case \ref{HYPERSURFVersion2Case2}-\ref{HYPERSURFVersion2Case6} of the theorem (See Table \ref{table:kysymys}). 

Now, assume that  $\tilde M$ is congruent to the surface given  in the case \ref{LabCaseDegenM20} of the Proposition  \ref{PROPINSUBMOFD}. So, we may assume $\Theta_1(u,v)=( A t^{2} + A u^{2},0, t, u, A t^{2} + A u^{2})$. Thus, equation \eqref{LOCALARAMETRIZ} becomes
\begin{equation}\label{LOCALARAMETRIZbecomes}
x(s,t,u,v)=\phi_1( A t^{2} + A u^{2},0, t, u, A t^{2} + A u^{2})+\phi_2\Theta_2(v)+\Gamma(s).
\end{equation}
In this case, we have $\varepsilon_2=\varepsilon_3=1$, therefore, $\varepsilon_1=\varepsilon_4=-1$. Considering $\langle x_t,x_v\rangle=\langle x_u,x_v\rangle=0$, if we put
$$\Theta_2(v)=(\theta_1(v),\theta_2(v),\theta_3(v),\theta_4(v),\theta_5(v)),$$
we obtain $\theta_3'(v)=\theta_4'(v)=0$ and $\theta_1'(v)=\theta_5'(v)$. Therefore, by considering Lemma \ref{INTCURVEOFE4}, we see that redefining $\Gamma$ properly, we may assume that $\Theta_2(v)=(Bv^2,v,0,0,Bv^2)$ for  constant $B$. Thus, equation \eqref{LOCALARAMETRIZbecomes} implies that
\begin{equation}\label{LOCALARAMETRIZbecomes2}
x(s,t,u,v)=( A\phi_1 t^{2} + A\phi_1 u^{2}+B\phi_2v^2,v\phi_2, t\phi_1, u\phi_1, A\phi_1 t^{2} + A \phi_1u^{2}+B\phi_2v^2)+\Gamma(s).
\end{equation}
Considering equation \eqref{MainTHMMETRICSEq1a}, we obtain 
$\phi _1=2 \left(A \Gamma _1-A \Gamma _5\right)+A a_1$, $\phi _2=-2 \left(B \Gamma _1-B \Gamma _5\right)+B a_2$
for some constants $a_1,a_2$ and $\Gamma_i'=0,\ i=2,3,4$. Therefore, from equation \eqref{LOCALARAMETRIZbecomes}, we see that $M$ is congruent to the hypersurface given by  
\begin{align}\label{LOCALARAMETRIZbecomes3}
\begin{split}
x(s,t,u,v)=&\Big( 2 A^2 \left(t^2+u^2\right) \left(a_1+\Gamma _1-\Gamma _5\right)-2 B^2 v^2 \left(a_2+\Gamma _1-\Gamma _5\right)+\Gamma _1,\\
&-2 B v \left(a_2+\Gamma _1-\Gamma _5\right),2 A t \left(a_1+\Gamma _1-\Gamma _5\right),2 A u \left(a_1+\Gamma _1-\Gamma _5\right),\\
&2 A^2 \left(t^2+u^2\right) \left(a_1+\Gamma _1-\Gamma _5\right)-2 B^2 v^2 \left(a_2+\Gamma _1-\Gamma _5\right)+\Gamma _5
\Big).
\end{split}
\end{align}
Finally, by defining new coordinates  $\tilde s=\Gamma _1-\Gamma _5+a_1$, $\tilde t=2A t$, $\tilde u=2A u$ and $\tilde v=-2B v$, we see that $M$ is congruent to the surface given in equation \eqref{HYPERSURFVersion2CaseDegS1Eq1} for a function $\psi=\psi(\tilde s)$. It is noted that the induced metric of the surface given by equation \eqref{HYPERSURFVersion2CaseDegS1Eq1} has the form
$g=(1-2 \psi ')d s^2+ s^2d u^2+ s^2d u^2-( s+a)^2d v^2.$
Since $M$ has index 2, we have $1-2 \psi '<0$. Hence, we have the case \ref{HYPERSURFVersion2CaseDegS2} of the theorem.

By a similar way, we see that if $\tilde M$ is congruent to the surface given  in the case \ref{LabCaseDegenM21} of the Proposition  \ref{PROPINSUBMOFD}, then $M$ is congruent to the hypersurface given by equation \eqref{HYPERSURFVersion2CaseDegS2Eq1} which yields the case \ref{HYPERSURFVersion2CaseDegS2} of the theorem. 

\begin{table}
\begin{tabular}{|c| c| c|}
\hline
Integral submanifold of D& Integral curve of $e_4$ & The hypersurface obtained\\
&(given in the Lemma  \ref{INTCURVEOFE4})& (given in the Theorem \ref{MAINTHM3})\\
\hline
Congruent to $\mathbb H^2_0$&The case \ref{CASEhyperV1}& The case \ref{HYPERSURFVersion2Case1} \\
Congruent to $\mathbb S^2_0$&The case \ref{CASEcircV2}& The case \ref{HYPERSURFVersion2Case2} \\
Congruent to $\mathbb H^2_1$&The case \ref{CASEcircV1}& The case \ref{HYPERSURFVersion2Case3} \\
Congruent to $\mathbb S^2_1$&The case \ref{CASEhyperV2}&  The case \ref{HYPERSURFVersion2Case5}\\
Congruent to $\mathbb S^2_1$&The case \ref{CASEhyperV1}& The case \ref{HYPERSURFVersion2Case4}\\
Congruent to $\mathbb S^2_2$&The case \ref{CASEcircV1}&The case \ref{HYPERSURFVersion2Case6} \\
The surface given by \eqref{LabCaseDegenM20SURF}&The case  \ref{CASEDegeneratednv2}&The case  \ref{HYPERSURFVersion2CaseDegS1}\\
The surface given by \eqref{LabCaseDegenM21SURF}&The case  \ref{CASEDegeneratednv1}&The case  \ref{HYPERSURFVersion2CaseDegS2}\\\hline
\end{tabular}
\caption{Hypersurfaces obtained for $k_2=k_3\neq0,k_4\neq0$}
\label{table:kysymys}
\end{table}
\end{proof}
\section{Conclusions}
It is observed that Theorem \ref{MAINTHM1}, Theorem \ref{MAINTHM2} and Theorem \ref{MAINTHM3} provide necessary condition for being biconservative of a hypersurface of index 2 in $\mathbb E^5_2$. However, choosing appropriate functions $\phi,\psi$ or $\phi_1,\phi_2$ appearing in these theorems, one can see that there exists biconservative hypersurfaces belonging to each of these families obtained in the previous section. We also would like to mention that all the biconservative hypersurfaces obtained so far has at most three distinct principal curvatures. 

  In this context, an explicit example of biconservative hypersurface in $\mathbb E^5_2$ with \textbf{four distinct } principal curvatures has been presented. Moreover, particular choices of constants $a$ and $b$ in this example provides the existence of  biconservative hypersurfaces belonging to the hypersurface family given in the case \ref{HYPERSURFVersion2CaseDegS1} of the Theorem \ref{MAINTHM3}.
\begin{Example}
Consider the hypersurface $M$ given by
\begin{align}\label{DegS1Eq1Generlza}
\begin{split}
x(s, t, u, v)=&\left(-a v^2+b u^2+\frac{1}{2} s \left(t^2+u^2-v^2\right)+\psi ,v (s+2a),s t,u (s+2b),\right.\\&\left.\ \ 
-a v^2+b u^2+\frac{1}{2} s \left(t^2+u^2-v^2\right)+\psi -s
\right)
\end{split}
\end{align}
in the pseudo-Euclidean space $\mathbb E^5_2$, where $a\neq0,b$ are  constants and $2 \psi '-1>0$. By a direct computation, we see that vector fields
\begin{align}\nonumber
\begin{split}
e_1=\frac{\nabla H}{(-\langle\nabla H,\nabla H\rangle)^{1/2}}=\frac{1}{\sqrt{2 \psi '-1}}\partial_s,\\
e_2= \frac{1}{s}\partial_t,\quad e_3= \frac{1}{s+2b}\partial_t,\quad e_4= \frac{1}{s+2a}\partial_v.
\end{split}
\end{align}
form an orthonormal frame field for the tangent bundle of $M$ such that $-\varepsilon_1=\varepsilon_2=\varepsilon_3=-\varepsilon_4=1$ and the unit normal vector field of $M$ is given by
\begin{align}\nonumber
\begin{split}
N=\frac{1}{\sqrt{2 \psi '-1}} \left(\frac{t^2+u^2-v^2}2+1-\psi',v,t,u,\frac{t^2+u^2-v^2}2-\psi'\right).
\end{split}
\end{align}
A further computation yields that $e_1,e_2,e_3,e_4$ are principal directions corresponding to principal curvatures $k_1,k_2,k_3,k_4$ given by 
\begin{align}\label{EXPExamplePrCurvs}
\begin{split}
k_1=\frac{\psi ''}{\left(2 \psi '-1\right)^{3/2}},\quad& k_2=-\frac{1}{s \sqrt{2 \psi '-1}},\quad k_3=-\frac{1}{(s+2b) \sqrt{2 \psi '-1}}\\
&k_4= -\frac{1}{(s+2a) \sqrt{2 \psi '-1}}.
\end{split}
\end{align}
Hence, $M$ is a biconservative hypersurface with index 2 if and only if $k_1=-2H$ and which is equivalent to the second order differential equation
$$\frac{3 \psi ''}{2 \psi '-1}=\frac{1}{s+2a}+\frac{1}{s+2b}+\frac{1}{s}.$$
By solving this differential equation, we obtain
$$\psi=\frac s2+c\int_0^s\left(\xi(\xi+2a)(\xi+2b)\right)^{2/3}d\xi$$
for a non-zero constant c.
\end{Example}


\begin{Remark}
 An obvious extension of this hypersurface in the pseudo-Euclidean space $\mathbb E^{n+1}_2$ of arbitrary dimension is given by 
\begin{align}\label{DegS1Eq1Generliezza}
\begin{split}
x(s, t_1, t_2, \hdots t_{n-1})=&
\left(-a_1t_1^2+a_2t_2^2+\cdots+a_{n-1}t_{n-1}^2  +\frac{s\left(t_2^2+t_3^2+\cdots+t_{n-1}^2-t_1^2\right)}{2}\right.\\&\ \ +\psi,
t_1 (s+2a_1),t_2 (s+2a_2),\hdots,t_{n-1} (s+2a_2),-a_1t_1^2+a_2t_2^2+\cdots\\&\ \ \left.+a_{n-1}t_{n-1}^2 +
\frac{s\left(t_2^2+t_3^2+\cdots+t_{n-1}^2-t_1^2\right)}{2}+\psi -s
\right)
\end{split}
\end{align}
which provides an example of biconservative hypersurface for a particularly chosen smooth function $\psi$. Moreover, if all constants $a_1,a_2,\hdots a_{n-1}$ are distinct, then $M$ has \textbf{$n$ distinct} principal curvatures.
\end{Remark}

\section*{Acknowledgements}
First author is supported by post doctoral scholarship of ``Harish Chandra Research Institute", Department of Atomic Energy, Government of India and the second named author is supported by  T\"UB\.ITAK (Project Name: 'Y\_EUCL2TIP' Project Number: 114F199).



\begin{thebibliography}{110}

\vspace{-7pt}


\bibitem{Arvan1}
A. Arvanitoyeorgos, F. Defever, G. Kaimakamis and V. Papantoniou: \emph{Biharmonic Lorentz hypersurfaces in $E_1^4$}, Pacific J. Math, 229, 293--305, (2007).

\bibitem{Arvan2}
A. Arvanitoyeorgos, F. Defever, G. Kaimakamis and V. Papantoniou: \emph{Hypersurfaces of $\mathbb E^4_s$ with proper mean curvature vector }, J. Math. Soc. Japan, 59, 797--809, (2007).

\bibitem{BMO1} A. Balmus, S. Montaldo and C. Oniciuc: {\em
Classification results for biharmonic submanifolds in spheres},
Israel J. Math., 168, 201--220, (2008).


\bibitem{ChenKitapTotal2ndEd} {B. Y. Chen}: 
{\em Total mean curvature and submanifolds of finite type, 2nd Edition}, 
World Scientific, Hackensack--NJ, 2014.

\bibitem{ChenOpenProblems}
B. Y. Chen: \emph{Some open problems and conjectures on submanifolds of finite type}. Soochow J. Math. 17, no. 2, 169-188, (1991).

\bibitem{ChenRapor}
B. Y. Chen:
\emph{A report on submanifolds of finite type}, Soochow J. Math., 22, 117--337, (1996).

\bibitem{ChenIshikawa1991Bih}
B. Y. Chen and S. Ishikawa: \emph{Biharmonic surfaces in pseudo-Euclidean spaces}, Mem. Fac. Sci. Kyushu Univ. Ser. A, 45, no. 2, 323--347, (1991).

\bibitem{ChenIshikawa1998Bih}
B. Y. Chen and S. Ishikawa: \emph{Biharmonic pseudo-Riemannian submanifolds in pseudo-Euclidean spaces}, Kyushu J. Math.,  52,  no. 1, 167--185, (1998).

\bibitem{chenMunteanu2013}
B. Y. Chen and M. I. Munteanu: {\em Biharmonic ideal hypersurfaces
in Euclidean spaces}, Different. Geom. Appl., 31, 1--16, (2013).

\bibitem{ONeillKitap}
 B. O. Neill:
\emph{Semi-Riemannian geometry with applications to relativity}. Academic Press, 1983.

\bibitem{FetcuOniciucPinheiro} D. Fetcu, C. Oniciuc and A. L. Pinheiro: \textit{CMC biconservative surfaces in $S^n\times R$ and $H^n\times R$},  J. Math. Anal. Appl., 425,  588--609, (2015).

\bibitem{PseudoYuxinDongandYe-Lin}
D. Yuxin and O. Ye-Lin: \emph{Biharmonic submanifold of Pseudo-Riemannian submanifolds}, (Submitted to arxiv).

\bibitem{Defever1996}
F. Defever: \emph{Hypersurfaces of $\mathbb E^4$ with harmonic mean curvature vector}, Math Nachr, 196, 61--69, (1998).

\bibitem{Defever2006}
F. Defever, G. Kaimakamis and V. Papantoniou: \textit{Biharmonic hypersurfaces of the 4-dimensional semi-Euclidean space $\mathbb E^4_s$}, J. Math. Anal. Appl., 315, 276--286, (2006).

\bibitem{Jiang1985} 
G. Y. Jiang: \emph{2-harmonic isometric immersions between Riemannian manifolds}, Chinese Ann. Math. Ser. A, 7, 130--144, (1986).

\bibitem{Dimitric1992} I. Dimitric: \textit{Submanifolds of $\mathbb E^n$ with harmonic mean curvature vector}, Bull. Inst. Math.
Acad. Sinica, 20, 53--65, (1992).

\bibitem{MagidIsop} M. A. Magid: \emph{Lorentzian isoparametric hypersurfaces}, Pacific J. Math., 118, no. 1, 165-197, (1985).

\bibitem{TurgayHHypersurface} N. C. Turgay: \emph{H-hypersurfaces with  3 distinct principal curvatures in the Euclidean spaces}, Ann. Mat. Pura Appl., 194, 1795--1807, (2015).

\bibitem{Lucas2011}
P. Lucas and H. F. Ramirez-Ospina: \emph{Hypersurfaces in the Lorentz-Minkowski space satisfying $L_k\psi=A\psi+b$}, Geom. Dedicata, 153, 151--175, (2011). 

\bibitem{CMO2} R. Caddeo, S. Montaldo, and C. Oniciuc: {\em Biharmonic submanifolds in spheres},
 Israel J. Math., 130, 109--123, (2002).

\bibitem{CMOP} R. Caddeo, S. Montaldo, C. Oniciuc and P. Piu: {\em Surfaces in three-dimensional space forms
with divergence-free stress-bienergy tensor}, Ann. Mat. Pura Appl.,
193, 529--550, (2014).

\bibitem{KobayashiBook} S. Kobayashi and K. Nomizu: \emph{Foundations of differential geometry}. Vol. I, John Wiley \& Sons, Inc., New York, 1996.

\bibitem{MOR20141}
S. Montaldo, C. Oniciuc and A. Ratto: {\em Proper Biconservative
immersions into the Euclidean space}, Ann. Mat. Pura Appl., DOI:
10.1007/s10231-014-0469-4.

\bibitem{HsanisField} 
T. Hasanis and I. Vlachos: \emph{Hypersurfaces in $\mathbb E^4$ with harmonic mean curvature vector field}, Math Nachr, 172, 145--169, (1995).


\bibitem{Papantoniou}
V. J. Papantoniou and K. Petoumenos: \emph{Biharmonic hypersurfaces of type $M^3_2$ in $\mathbb E^4_2$}, Houston J. Math., 38, no. 1, 93--114, (2012).

\bibitem{Fu2013MinkBih} 
Y. Fu: \emph{On bi-conservative surfaces in Minkowski 3-space}, J. Geom. Phys., 66, 71--79, (2013). 

\bibitem{YuFu2014} 
Y. Fu: \emph{Biharmonic hypersurfaces with three distinct principal curvatures in Euclidean 5-space}, J. Geom. Phys, 75, 113--119, (2014).


\end{thebibliography}
\end{document}